\theoremstyle{plain}
\newtheorem{theorem}{Theorem}[section]
\newtheorem{proposition}[theorem]{Proposition}
\newtheorem{lemma}[theorem]{Lemma}
\theoremstyle{definition}
\newtheorem{definition}[theorem]{Definition}
\newtheorem{example}[theorem]{Example}
\numberwithin{equation}{section}
\newcommand{\id}{\mathrm{id}}
\DeclareMathOperator{\Av}{\mathrm{Av}}
\DeclareMathOperator{\unSvar}{{\mathsf{cand}}}
\DeclareMathOperator{\inv}{{\mathsf{inv}}}
\DeclareMathOperator{\ninv}{{\mathsf{ninv}}}
\DeclareMathOperator{\symS}{{\mathcal{S}}}
\newcommand{\sepp}{\,|\,} 							
\newcommand{\dbrac}[1]{{\llbracket #1 \rrbracket}} 	
\newcommand{\pattern}[4]{										
  \raisebox{0.6ex}{
  \begin{tikzpicture}[scale=0.35, baseline=(current bounding box.center), #1]
  \useasboundingbox (0.0,-0.1) rectangle (#2+1.4,#2+1.1);
    \foreach \x/\y in {#4}
      \fill[pattern color = black!65, pattern=north east lines] (\x,\y) rectangle +(1,1);
    \draw (0.01,0.01) grid (#2+0.99,#2+0.99);
    \foreach \x/\y in {#3}
      \filldraw (\x,\y) circle (6pt);
  \end{tikzpicture}}
}
\newcommand{\patternnl}[4]{										
  \raisebox{0.6ex}{
  \begin{tikzpicture}[scale=0.35, baseline=(current bounding box.center), #1]
  \useasboundingbox (0.85,-0.1) rectangle (#2+1.4,#2+1.1);
    \foreach \x/\y in {#4}
      \fill[pattern color = black!65, pattern=north east lines] (\x,\y) rectangle +(1,1);
    \foreach \x/\y in {#3}
      \filldraw (\x,\y) circle (6pt);
  \end{tikzpicture}}
}
\newcommand{\onetwo}{\patternnl{scale=0.2}{2}{1/1,2/2}{}}		
\newcommand{\imopattern}[6]{									
  \raisebox{0.6ex}{
  \begin{tikzpicture}[scale=0.35, baseline=(current bounding box.center), #1]
  \useasboundingbox (0.0,-0.1) rectangle (#2+1.4,#2+1.1);
    \foreach \x/\y in {#6}
      \fill[pattern color = black!65, pattern=north east lines] (\x,\y) rectangle +(1,1);
    \draw (0.01,0.01) grid (#2+0.99,#2+0.99);
    \foreach \x/\y in {#4}
      \draw[fill=white] (\x,\y) circle (6pt);
    \foreach \x/\y in {#5}
      \draw[fill=white] (\x,\y) circle (11pt);
    \foreach \x/\y in {#3}
      \filldraw (\x,\y) circle (6pt);
  \end{tikzpicture}}
}
\newcommand{\patternsbm}[5]{									
  \raisebox{0.6ex}{
  \begin{tikzpicture}[scale=0.35, baseline=(current bounding box.center), #1]
  \useasboundingbox (0.0,-0.1) rectangle (#2+1.4,#2+1.1);
    \foreach \x/\y in {#4}
      \fill[pattern color = black!65, pattern=north east lines] (\x,\y) rectangle +(1,1);
    \draw (0.01,0.01) grid (#2+0.99,#2+0.99);
    \foreach \x/\y/\z/\w/\A in {#5}
       \fill[color = white!100, opacity=1, rounded corners = 1.5pt] (\x+0.125,\y+0.125) rectangle (\z-0.125,\w-0.125);
    \foreach \x/\y/\z/\w/\A in {#5}
       \draw[color = black, rounded corners = 1.5pt] (\x+0.125,\y+0.125) rectangle (\z-0.125,\w-0.125);
    \foreach \x/\y/\z/\w/\A in {#5}
       \fill[black] (\x/2+\z/2,\y/2+\w/2) node {$\scriptstyle\A$};
    \foreach \x/\y in {#3}
      \filldraw (\x,\y) circle (6pt);

  \end{tikzpicture}}
}
\newcommand{\imopatternsbm}[7]{									
  \raisebox{0.6ex}{
  \begin{tikzpicture}[scale=0.35, baseline=(current bounding box.center), #1]
  \useasboundingbox (0.0,-0.1) rectangle (#2+1.4,#2+1.1);
    \foreach \x/\y in {#6}
      \fill[pattern color = black!65, pattern=north east lines] (\x,\y) rectangle +(1,1);
    \draw (0.01,0.01) grid (#2+0.99,#2+0.99);
    \foreach \x/\y/\z/\w/\A in {#7}
       \fill[color = white!100, opacity=1, rounded corners = 1.5pt] (\x+0.125,\y+0.125) rectangle (\z-0.125,\w-0.125);
    \foreach \x/\y/\z/\w/\A in {#7}
       \draw[color = black, rounded corners = 1.5pt] (\x+0.125,\y+0.125) rectangle (\z-0.125,\w-0.125);
    \foreach \x/\y/\z/\w/\A in {#7}
       \fill[black] (\x/2+\z/2,\y/2+\w/2) node {$\scriptstyle\A$};
    \foreach \x/\y in {#4}
      \draw[fill=white] (\x,\y) circle (6pt);
    \foreach \x/\y in {#5}
      \draw[fill=white] (\x,\y) circle (11pt);
    \foreach \x/\y in {#3}
      \filldraw (\x,\y) circle (6pt);
  \end{tikzpicture}}
}
\newcommand{\decpattern}[6]{									
  \raisebox{0.6ex}{
  \begin{tikzpicture}[scale=0.35, baseline=(current bounding box.center), #1]
  \useasboundingbox (0.0,-0.1) rectangle (#2+1.4,#2+1.1);
    \foreach \x/\y in {#4}
      {
      \fill[pattern color = black!65, pattern=north east lines] (\x,\y) rectangle +(1,1);
      }
    \draw (0.01,0.01) grid (#2+0.99,#2+0.99);
       
    \foreach \x/\y/\z/\w/\A in {#5}
       {
       \fill[color = white!100, opacity=1, rounded corners=1.5pt] (\x+0.125,\y+0.125) rectangle (\z-0.125,\w-0.125);
       \draw[color = black, rounded corners=1.5pt] (\x+0.125,\y+0.125) rectangle (\z-0.125,\w-0.125);
       \fill[black] (\x/2+\z/2,\y/2+\w/2) node {$\scriptstyle\A$};
       }
    \foreach \x/\y/\z/\w/\A in {#6}
       {
       \fill[color = white!100, opacity=1, rounded corners=1.5pt] (\x+0.125,\y+0.125) rectangle (\z-0.125,\w-0.125);
       \fill[pattern color = gray, pattern=north east lines, rounded corners=1.5pt] (\x+0.125,\y+0.125) rectangle (\z-0.125,\w-0.125);
       \draw[color = black, rounded corners=1.5pt] (\x+0.125,\y+0.125) rectangle (\z-0.125,\w-0.125);
       \fill[black] (\x/2+\z/2,\y/2+\w/2) node {$\scriptstyle\A$};
       }
    \foreach \x/\y in {#3}
      \filldraw (\x,\y) circle (6pt);

  \end{tikzpicture}}
}
\newcommand{\decpatternww}[8]{								
  \raisebox{0.6ex}{
  \begin{tikzpicture}[scale=0.35, baseline=(current bounding box.center), #1]
  \useasboundingbox (0.0,-0.1) rectangle (#2+1.4,#2+1.1);
    \foreach \x/\y in {#6}
      {
      \fill[pattern color = black!65, pattern=north east lines] (\x,\y) rectangle +(1,1);
      }
    \draw (0.01,0.01) grid (#2+0.99,#2+0.99);
       
    \foreach \x/\y/\z/\w/\A in {#7}
       {
       \fill[color = white!100, opacity=1, rounded corners=1.5pt] (\x+0.125,\y+0.125) rectangle (\z-0.125,\w-0.125);
       \draw[color = black, rounded corners=1.5pt] (\x+0.125,\y+0.125) rectangle (\z-0.125,\w-0.125);
       \fill[black] (\x/2+\z/2,\y/2+\w/2) node {$\scriptstyle\A$};
       }
    \foreach \x/\y/\z/\w/\A in {#8}
       {
       \fill[color = white!100, opacity=1, rounded corners=1.5pt] (\x+0.125,\y+0.125) rectangle (\z-0.125,\w-0.125);
       \fill[pattern color = gray, pattern=north east lines, rounded corners=1.5pt] (\x+0.125,\y+0.125) rectangle (\z-0.125,\w-0.125);
       \draw[color = black, rounded corners=1.5pt] (\x+0.125,\y+0.125) rectangle (\z-0.125,\w-0.125);
       \fill[black] (\x/2+\z/2,\y/2+\w/2) node {$\scriptstyle\A$};
       }
    \foreach \x/\y in {#4}
      \draw[fill=white] (\x,\y) circle (6pt);
    \foreach \x/\y in {#5}
      \draw[fill=white] (\x,\y) circle (11pt);
    \foreach \x/\y in {#3}
      \filldraw (\x,\y) circle (6pt);

  \end{tikzpicture}}
}
\newcommand{\vinc}[3]{
\begin{tikzpicture}[baseline, inner sep = 0mm]

	\begin{scope}[yshift = 3]
	
	\foreach \x/\y in {#2}
	{
		\node at (\x*0.2,-0.14)  [label=$\y$] {};  
	}
	
	\foreach \z in {#3}
	{
		\ifnum 0<\z
			\ifnum \z<#1
				\draw[thick] (\z*0.2-0.07,-0.17) -- (\z*0.2+0.27,-0.17);
			\fi
		\fi
		
		\ifnum 0=\z
			\draw[thick] (0.08,0.1) -- (0.08,-0.17) -- (0.26,-0.17);
		\fi
		
		\ifnum \z=#1
			\draw[thick] (\z*0.2+0.13,0.1) -- (\z*0.2+0.13,-0.17) -- (\z*0.2-0.05,-0.17);
		\fi
	}
	\end{scope}
\end{tikzpicture}
}
\newcommand{\vinci}[3]{
\begin{tikzpicture}[baseline, scale = 0.8, , inner sep = 0mm]

	\begin{scope}[yshift = 3]
	
	\foreach \x/\y in {#2}
	{
		\node at (\x*0.2,-0.14)  [label=$\scriptstyle{\y}$] {};  
	}
	
	\foreach \z in {#3}
	{
		\ifnum 0<\z
			\ifnum \z<#1
				\draw[thick] (\z*0.2-0.07,-0.17) -- (\z*0.2+0.27,-0.17);
			\fi
		\fi
		
		\ifnum 0=\z
			\draw[thick] (0.08,0.1) -- (0.08,-0.17) -- (0.26,-0.17);
		\fi
		
		\ifnum \z=#1
			\draw[thick] (\z*0.2+0.13,0.1) -- (\z*0.2+0.13,-0.17) -- (\z*0.2-0.05,-0.17);
		\fi
	}
	\end{scope}
\end{tikzpicture}
}
\newcommand{\bivinc}[4]{
\begin{tikzpicture}[baseline, inner sep = 0mm]

	\begin{scope}[yshift = -2]
	
	\foreach \x/\y in {#2}
	{
		\node at (\x*0.2,0.15)   [label=$\x$] {};  
		\node at (\x*0.2,-0.14)  [label=$\y$] {};  
	}
	
	\foreach \z in {#3}
	{
		\ifnum 0<\z
			\ifnum \z<#1
				\draw[thick] (\z*0.2-0.07,-0.17) -- (\z*0.2+0.27,-0.17);
			\fi
		\fi
		
		\ifnum 0=\z
			\draw[thick] (0.08,0.1) -- (0.08,-0.17) -- (0.21,-0.17);
		\fi
		
		\ifnum \z=#1
			\draw[thick] (\z*0.2+0.13,0.1) -- (\z*0.2+0.13,-0.17) -- (\z*0.2,-0.17);
		\fi
	}
	
	\foreach \z in {#4}
	{
		\ifnum 0<\z
			\ifnum \z<#1
				\draw[thick] (\z*0.2-0.07,0.47) -- (\z*0.2+0.27,0.47);
			\fi
		\fi
		
		\ifnum 0=\z
			\draw[thick] (0.08,0.21) -- (0.08,0.47) -- (0.21,0.47);
		\fi
		
		\ifnum \z=#1
			\draw[thick] (\z*0.2+0.13,0.21) -- (\z*0.2+0.13,0.47) -- (\z*0.2,0.47);
		\fi
	}
	\end{scope}
\end{tikzpicture}
}
\newcommand{\bivincs}[5]{
\begin{tikzpicture}[baseline, inner sep = 0mm]

	\begin{scope}[yshift = -2]

	\foreach \x/\w in {#2}
	{
		\node at (\x*0.2,-0.14)   [label=$\w$] {}; 
	}
	
	\foreach \x/\w in {#3}
	{
		\node at (\x*0.2,0.15) [label=$\w$] {}; 
	}
	
	\foreach \z in {#4}
	{
		\ifnum 0<\z
			\ifnum \z<#1
				\draw[thick] (\z*0.2-0.07,-0.17) -- (\z*0.2+0.27,-0.17);
			\fi
		\fi
		
		\ifnum 0=\z
			\draw[thick] (0.08,0.1) -- (0.08,-0.17) -- (0.26,-0.17);
		\fi
		
		\ifnum \z=#1
			\draw[thick] (\z*0.2+0.13,0.1) -- (\z*0.2+0.13,-0.17) -- (\z*0.2-0.05,-0.17);
		\fi
	}
	
	\foreach \z in {#5}
	{
		\ifnum 0<\z
			\ifnum \z<#1
				\draw[thick] (\z*0.2-0.07,0.47) -- (\z*0.2+0.27,0.47);
			\fi
		\fi
		
		\ifnum 0=\z
			\draw[thick] (0.08,0.21) -- (0.08,0.47) -- (0.26,0.47);
		\fi
		
		\ifnum \z=#1
			\draw[thick] (\z*0.2+0.13,0.21) -- (\z*0.2+0.13,0.47) -- (\z*0.2-0.05,0.47);
		\fi
	}
	\end{scope}
\end{tikzpicture}
}
\newcommand{\bivinci}[4]{
\begin{tikzpicture}[baseline, scale = 0.8, inner sep = 0mm]

	\begin{scope}[yshift = -2]
	
	\foreach \x/\y in {#2}
	{
		\node at (\x*0.2,0.15)   [label=$\scriptstyle{\x}$] {};  
		\node at (\x*0.2,-0.14)  [label=$\scriptstyle{\y}$] {};  
	}
	
	\foreach \z in {#3}
	{
		\ifnum 0<\z
			\ifnum \z<#1
				\draw[thick] (\z*0.2-0.07,-0.17) -- (\z*0.2+0.27,-0.17);
			\fi
		\fi
		
		\ifnum 0=\z
			\draw[thick] (0.08,0.1) -- (0.08,-0.17) -- (0.26,-0.17);
		\fi
		
		\ifnum \z=#1
			\draw[thick] (\z*0.2+0.13,0.1) -- (\z*0.2+0.13,-0.17) -- (\z*0.2-0.05,-0.17);
		\fi
	}
	
	\foreach \z in {#4}
	{
		\ifnum 0<\z
			\ifnum \z<#1
				\draw[thick] (\z*0.2-0.07,0.47) -- (\z*0.2+0.27,0.47);
			\fi
		\fi
		
		\ifnum 0=\z
			\draw[thick] (0.08,0.21) -- (0.08,0.47) -- (0.26,0.47);
		\fi
		
		\ifnum \z=#1
			\draw[thick] (\z*0.2+0.13,0.21) -- (\z*0.2+0.13,0.47) -- (\z*0.2-0.05,0.47);
		\fi
	}
	\end{scope}
\end{tikzpicture}
}
\begin{document}

\title[Sorting and preimages of pattern classes]{Sorting and preimages of pattern classes}

\author[Claesson]{Anders Claesson}
\author[\'Ulfarsson]{Henning \'Ulfarsson}

\address[Claesson]{Department of Computer and Information Sciences, University of Strathclyde, Glasgow G1 1XH, UK}
\address[\'Ulfarsson]{School of Computer Science, Reykjav\'ik University, Menntavegi 1, 101 Reykjav\'ik, Iceland}

\email{anders.claesson@cis.strath.ac.uk, henningu@ru.is}


\begin{abstract}
We introduce an algorithm to determine when a sorting operation, such as stack-sort or bubble-sort, outputs a given pattern.
  The algorithm provides a new proof of the
  description of West-$2$-stack-sortable permutations, that is
  permutations that are completely sorted when passed twice through a
  stack, in terms of patterns. We also
  solve the long-standing problem of describing
  West-$3$-stack-sortable permutations. This requires a new type of
  generalized permutation pattern we call a decorated pattern.
\end{abstract}

\keywords{Permutation Patterns, Sorting algorithms}


\maketitle

\setcounter{tocdepth}{1}
\tableofcontents

\section{Introduction}
\label{sec:in}
The set of permutations of $\{1,\dotsc,n\}$ is denoted $\symS_n$.
Permutations will be written in one-line notation, and the identity permutation $12\dotsm n$ will be denoted $\id_n$, or just $\id$ if $n$ is understood from context.

In the 1970's Knuth~\cite{MR0378456} initiated the study of sorting
and pattern avoidance in permutations. He considered the problem of
sorting a permutation by passing it through a stack.  A \emph{stack}
is a last in, first out data structure with two fundamental
operations: the \emph{push} operation moves an item from the input to
the top of the stack; the \emph{pop} operation moves an item from the
top of the stack to the output. Consider trying to sort the
permutation $231$ by a stack, as shown in Figure \ref{fig:231stack}.
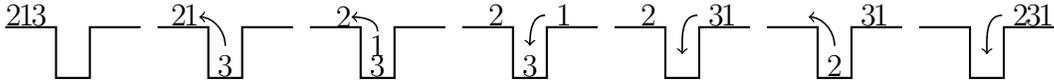
\begin{figure}[htbp]
\begin{center}

\begin{tikzpicture}
 [scale = 0.225, bend angle=37.5, pre/.style={<-,shorten <=1pt,semithick},
   post/.style={->,shorten >=1pt,semithick}]
  
 \begin{scope}[xshift=54cm]
 \draw [black,thick] (-4,-0.35) -- (-1,-0.35) -- (-1,-3.35) -- (1,-3.35) -- (1,-0.35) -- (4,-0.35);
 \node[] at (0,-2.75) (stack1) {};
 
 \node[] at (2,0.35) (first) {2}
 	edge [post,in=90, out=180]		(stack1);
 \node[] at (2.75,0.35) (second) {3};
 \node[] at (3.5,0.35) (third) {1};
 \end{scope}
 
 \begin{scope}[xshift=45cm]
 \draw [black,thick] (-4,-0.35) -- (-1,-0.35) -- (-1,-3.35) -- (1,-3.35) -- (1,-0.35) -- (4,-0.35);
 \node[] at (-2,0.45) (out1) {};
 
 \node[] at (2,0.35) (first) {3};
 \node[] at (2.75,0.35) (second) {1};
 
 \node[] at (0,-2.65) (stack1) {2};
 \node[] at (0,-2.1) {} edge [post, in=0, out=90] (-1.7,0.25);
 \end{scope}
 
 \begin{scope}[xshift=36cm]
 \draw [black,thick] (-4,-0.35) -- (-1,-0.35) -- (-1,-3.35) -- (1,-3.35) -- (1,-0.35) -- (4,-0.35);
 \node[] at (0,-2.75) (stack1) {};
 
 \node[] at (2,0.35) (first) {3}
 	edge [post,in=90, out=180]		(stack1);
 \node[] at (2.75,0.35) (second) {1};
 
 \node[] at (-2,0.35) (out1) {2};
 \end{scope}
 
 \begin{scope}[xshift=27cm]
 \draw [black,thick] (-4,-0.35) -- (-1,-0.35) -- (-1,-3.35) -- (1,-3.35) -- (1,-0.35) -- (4,-0.35);
 \node[] at (0,-2.25) (stack2) {};
  
 \node[] at (2,0.35) (first) {1}
 	edge [post,in=90, out=180]		(stack2);
 
 \node[] at (0,-2.65) (stack1) {3};
 
 \node[] at (-2,0.35) (out1) {2};
 \end{scope}
 
 \begin{scope}[xshift=18cm]
 \draw [black,thick] (-4,-0.35) -- (-1,-0.35) -- (-1,-3.35) -- (1,-3.35) -- (1,-0.35) -- (4,-0.35);
 \node[] at (-2,0.25) (out1) {2};
 
 \node[] at (0,-2.65) (stack1) {3};
 \node[] at (0,-1.45) (stack2) {1};
 \node[] at (0,-1.25) {} edge [post,in=0, out=90] (-1.7,0.25);
 \end{scope}
 
 \begin{scope}[xshift=9cm]
 \draw [black,thick] (-4,-0.35) -- (-1,-0.35) -- (-1,-3.35) -- (1,-3.35) -- (1,-0.35) -- (4,-0.35);
 \node[] at (-2,0.35) (out1) {1};
 
 \node[] at (0,-2.65) (stack1) {3};
 
 \node[] at (0,-2.1) {} edge [post, in=0, out=90] (-1.7,0.25);
 
 \node[] at (-2.75,0.35) (out2) {2};
 \end{scope}
 
 \begin{scope}[xshift=0cm]
 \draw [black,thick] (-4,-0.35) -- (-1,-0.35) -- (-1,-3.35) -- (1,-3.35) -- (1,-0.35) -- (4,-0.35);
 
 \node[] at (-2,0.35) (out1) {3};
 \node[] at (-2.75,0.35) (out2) {1};
 \node[] at (-3.5,0.35) (out3) {2};
 \end{scope}
  
\end{tikzpicture}

\caption{Trying, and failing, to sort $231$ with a stack. The figure is read from right to left}
\label{fig:231stack}
\end{center}
\end{figure}

Note that we always want the elements in the stack to be increasing,
from the top, since otherwise it would be impossible for the output to
be sorted.  We failed to sort the permutation in one pass and
therefore say that it is not \emph{stack-sortable}, which Knuth showed is
equivalent to containg $231$ as a
pattern.  We will reprove this in
Theorem~\ref{thm:Knuth}. Several variations on Knuth's original
problem have been considered, see B\'ona~\cite{MR2028290} for a
survey. In this paper we consider repeatedly passing a
permutation through a stack, while keeping the elements on the stack in increasing
order from top to bottom. We also consider the bubble-sort
operator. We introduce a new method for finding patterns in a
permutation that will cause these sorting devices to output a given
pattern, and use this to solve the long-standing problem of describing
West-$3$-stack-sortable permutations. If the given pattern is classical (defined below) we show that the mesh
patterns introduced by Br\"and\'en and the first author~\cite{BC11} suffice,
but if the given pattern is itself a mesh pattern we will need to
introduce a new kind of generalized pattern we call a \emph{decorated}
pattern.

In Section~\ref{sec:algo} we describe an algorithm that given a classical pattern $p$ produces a finite list of (marked) mesh patterns $P$ such that $S^{-1}(\Av(p)) = \Av(P)$, where $S$ is the stack-sort operator. This algorithm automates proving some of the statements in the previous sections and can be extended to cover the bubble-sort operator as well.

We hope that our work is a step towards the general comprehension of the stack-sort operator and other similar operators. 
This extended abstract will appear at FPSAC 2012 and is based on two papers of the authors~\cite{U11W,CU11}.


\section{Generalized permutation patterns}
\label{sec:genpatts}

A \emph{standardization} of a list of numbers is another list of the same length such that the smallest letter in the original list has been replaced with $1$, the second smallest with $2$, \textit{etc}. The standardization of $5371$ is $3241$. A \emph{classical} (\emph{permutation}) \emph{pattern} is a permutation $p$ in $\symS_k$. A permutation $\pi$ in $\symS_n$ \emph{contains}, or \emph{has an occurrence of} the pattern $p$ if there are indices $1 \leq i_1 < \dotsb < i_k \leq n$ such that the standardization of $\pi(i_1) \dotsm \pi(i_k)$ equals the pattern $p$. If a permutation does not contain a pattern $p$ we say it \emph{avoids} $p$.
The permutation $\pi = 526413$ contains the pattern $p = 132$, and has three occurrences of it, in the subsequences $264$, $263$ and $243$. We can draw the \emph{graph} of the permutation by graphing the coordinates $(i,\pi(i))$ on a grid. For example the permutation $\pi$ above is shown in Figure \ref{fig:perm526413} where we have circled the occurrences of the pattern $p$.
\begin{figure}[htbp]
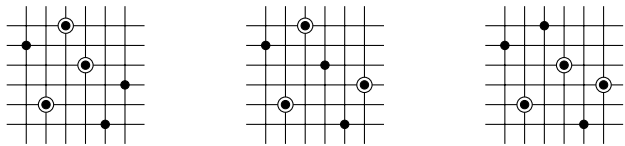

\begin{center}
\imopattern{scale=0.75}{ 6 }{ 1/5, 2/2, 3/6, 4/4, 5/1, 6/3 }
{}{ 2/2, 3/6, 4/4 }{}\qquad
\imopattern{scale=0.75}{ 6 }{ 1/5, 2/2, 3/6, 4/4, 5/1, 6/3 }
{}{ 2/2, 3/6, 6/3 }{}\qquad
\imopattern{scale=0.75}{ 6 }{ 1/5, 2/2, 3/6, 4/4, 5/1, 6/3 }
{}{ 2/2, 4/4, 6/3 }{}
\caption{The permutation $526413$ and three occurrences of the pattern $132$}
\label{fig:perm526413}
\end{center}
\end{figure}

\noindent
The same permutation avoids $123$, since we cannot find an increasing subsequence of length three.
%
%
%
%
%

\subsection{Mesh patterns and barred patterns}
Mesh patterns were introduced in \cite{BC11}. We review them via an example.
The mesh pattern
$
\pattern{scale=0.5}{ 3 }{ 1/1, 2/3, 3/2 }{0/2,1/2,2/2}
$
occurs in a permutation if we can find the underlying classical pattern $132$ positioned in such a way that the shaded regions are not occupied by other entries in the permutation. Another way of writing a mesh pattern is to give the underlying classical pattern, followed by the set of shaded boxes, labelled by their lower left corner (the left-most box in the bottom-most row being $(0,0)$). This mesh pattern is $(132,\{(0,2),(1,2),(2,2)\})$.

Consider the permutation $526413$. From above we know that the classical pattern has three occurrences in this permutation. In Figure \ref{fig:perm526413mesh} one can see that just one of these satisfies the additional requirement that there be no additional entries in the shaded region ``between and to the left of the $3$ and the $2$''.

\begin{figure}[htbp]
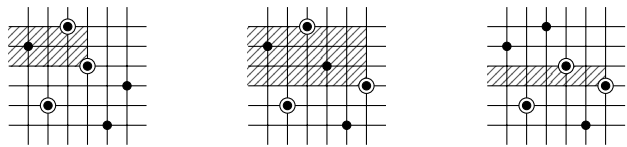

\begin{center}
\imopattern{scale=0.75}{ 6 }{ 1/5, 2/2, 3/6, 4/4, 5/1, 6/3 }
{}{ 2/2, 3/6, 4/4 }{0/4,0/5,1/4,1/5,2/4,2/5,3/4,3/5}\qquad
\imopattern{scale=0.75}{ 6 }{ 1/5, 2/2, 3/6, 4/4, 5/1, 6/3 }
{}{ 2/2, 3/6, 6/3 }{0/3,0/4,0/5,1/3,1/4,1/5,2/3,2/4,2/5,3/3,3/4,3/5,4/3,4/4,4/5,5/3,5/4,5/5}\qquad
\imopattern{scale=0.75}{ 6 }{ 1/5, 2/2, 3/6, 4/4, 5/1, 6/3 }
{}{ 2/2, 4/4, 6/3 }{0/3,1/3,2/3,3/3,4/3,5/3}
\caption{The permutation $526413$ and one occurrences of the mesh pattern $(132,\{(0,2),(1,2),(2,2)\})$}
\label{fig:perm526413mesh}
\end{center}
\end{figure}

\emph{Barred} patterns, introduced by West~\cite{W90}, are classical patterns with bars over some of the entries. Such a pattern is contained in a permutation if the standardization of the unbarred entries is contained in the permutation in such a way that they are not part of an occurrence of the whole pattern. The permutation $5264173$ contains the barred pattern $3\bar{5}241$, in the subsequence $5473$, since that is an occurrence of $3241$ that is not part of an occurrence of $35241$. In \cite{BC11} it was shown that any barred pattern with one barred entry is a mesh pattern. The barred pattern we discussed here is in fact the mesh pattern below.
$$
\pattern{scale=0.75}{ 4 }{ 1/3, 2/2, 3/4, 4/1 }{1/4}
$$

\subsection{Marked mesh patterns and decorated patterns}
Marked mesh patterns were introduced by the second author in \cite{U11} and give finer control over whether a certain region in a permutation is allowed to contain elements, and if so, how many. Again we just give an example.
Consider the marked mesh pattern
$
\patternsbm{scale=0.75}{ 3 }{ 1/1, 2/3, 3/2}{2/2}{1/0/3/2/\scriptstyle{1}}.
$
The meaning of the $1$ in the region containing boxes $(1,0)$, $(1,1)$, $(2,0)$ and $(2,1)$ is that this region must contain \emph{at least} one entry. In Figure \ref{fig:perm526413marked} we see that there is exactly one occurrence of this mesh pattern in the permutation $526413$.
\begin{figure}[htbp]
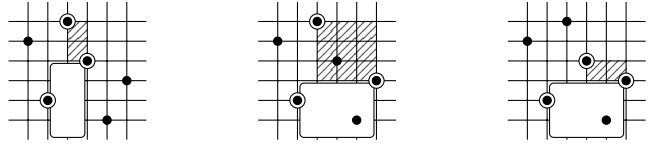

\begin{center}
\imopatternsbm{scale=0.75}{ 6 }{ 1/5, 2/2, 3/6, 4/4, 5/1, 6/3 }
{}{ 2/2, 3/6, 4/4 }{3/4,3/5}{2/0/4/4/{}} \qquad
\imopatternsbm{scale=0.75}{ 6 }{ 1/5, 2/2, 3/6, 4/4, 5/1, 6/3 }
{}{ 2/2, 3/6, 6/3 }{3/3,3/4,3/5,4/3,4/4,4/5,5/3,5/4,5/5}{2/0/6/3/{}} \qquad
\imopatternsbm{scale=0.75}{ 6 }{ 1/5, 2/2, 3/6, 4/4, 5/1, 6/3 }
{}{ 2/2, 4/4, 6/3 }{4/3,5/3}{2/0/6/3/{}}
\caption{The permutation $526413$ and one occurrence of a marked mesh pattern}
\label{fig:perm526413marked}
\end{center}
\end{figure}

Marked mesh patterns will be useful when we need to add elements into an existing pattern to ensure other elements are popped by a particular sorting device.
Below we will need even finer control over what is allowed inside a particular region in a pattern. We will need to control whether the entries in the region contain a particular pattern. Consider for example the decorated pattern
$
\decpattern{scale=1}{ 2 }{1/2, 2/1}{}{}{1/1/2/2/\onetwo}.
$
The decorated region in the middle signifies that an occurrence of this pattern should be an occurrence of the underlying classical pattern $21$ that additionally does not have entries in the region that contain the pattern $12$ -- or equivalently -- whatever is in that region must be in descending order, from left to right. In Figure \ref{fig:perm526413decorated} there is an occurrence of the decorated pattern on the left and on the right we have an occurrence of the classical pattern $21$ that does not satisfy the requirements of the decorated region.
Below we state the formal definition of a decorated pattern.
\begin{figure}[htbp]
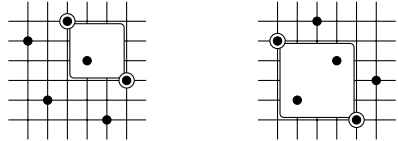

\begin{center}
\imopatternsbm{scale=0.75}{ 6 }{ 1/5, 2/2, 3/6, 4/4, 5/1, 6/3 }
{}{ 3/6, 6/3 }{}{3/3/6/6/{}} \qquad
\imopatternsbm{scale=0.75}{ 6 }{ 1/5, 2/2, 3/6, 4/4, 5/1, 6/3 }
{}{ 1/5, 5/1 }{}{1/1/5/5/{}}
\caption{The permutation $526413$ and one occurrences of a decorated pattern}
\label{fig:perm526413decorated}
\end{center}
\end{figure}

For integers $a < b$ we use $\dbrac{a,b}$ to denote the set $\{a, a+1, \dotsc, b\}$.

\begin{definition}
A \emph{decorated pattern} $(p,\mathcal{C})$ of length $k$ consists of a classical pattern
$p$ of length $k$ and a collection $\mathcal{C}$ which contains pairs $(C, q)$
where $C$ is a subset of the square $\dbrac{0,k} \times \dbrac{0,k}$ and $q$ is some pattern,
possibly another decorated pattern. An occurrence of $(p,\mathcal{C})$ in a permutation $\pi$
is a subset $\omega$
of the diagram $G(\pi) = \{ (i,\pi(i)) \sepp 1 \leq i \leq n \}$ such that there are order-preserving
injections $\alpha, \beta: \dbrac{1,k} \to \dbrac{1,n}$ satisfying two conditions:
\begin{enumerate}
\item $\omega = \{(\alpha(i),\beta(j)) \,\colon (i,j) \in G(p)\}$.
\item Let $R_{ij} = \dbrac{\alpha(i)+1, \alpha(i+1)-1} \times \dbrac{\beta(j)+1,  \beta(j+1) -1}$,
with $\alpha(0) = 0 = \beta(0)$ and $\alpha(k+1) = n+1 = \beta(k+1)$. For each pair $(C,q)$ we let
$C' = \bigcup_{(i,j) \in C} R_{ij}$ and require that
$
C' \cap G(\pi)
$
avoids $q$.
\end{enumerate}
\end{definition}

\section{Finding preimages of patterns}
\label{sec:preim}

In this section we define a method for describing patterns that are guaranteed to produce a given pattern in a permutation after it is sorted by a stack or with the bubble-sort operator.

\subsection{The stack-sort operator}
For a permutation $\pi$ we will denote with $S(\pi)$ the image of
$\pi$ after it is passed once through a stack. Note that a
permutation $\pi$ in $\symS_n$ is stack-sortable if and only if $S(\pi)
\in \Av_n(21)$, where $\Av_n(21)$ is the set of permutations of length
$n$ that avoid $21$. Of course $\Av_n(21) = \{\id\}$ but framing
the definition like this leads to a generalization: Given a pattern $p$, what
conditions need to be put on $\pi$ such that $S(\pi) \in \Av_n(p)$.
Exploring a sorting operator from this angle was first done by Albert
\textit{et al.}~\cite{AABCD10}; they, however, studied the bubble-sort
operator rather than the stack-sort operator.

Below we will call permutations such that $S^k(\pi) = \id$,
\emph{West-$k$-stack-sortable} permutations, since West considered
this generalization from the case of one stack first.
Note that for $k > 1$ these permutations are different from the \emph{$k$-stack-sortable} permutations,
which are the permutations that can be sorted by using $k$ stacks in series without the
requirement that the elements on the stack are increasing from top to bottom. For example,
the permutation $2341$ is not West-$2$-stack-sortable, but if we
put the entries $2,3,4$ onto the first stack, pass $1$ all the way to the end, and then use
the second stack to sort $2,3,4$ we end up with $1234$. So $2341$ is $2$-stack-sortable.

The basic idea
behind the method we are about to describe is that $S(\pi)$ has an
occurrence of a pattern $p$ of length $k$ if and only if the $k$
elements in this occurrence were present in $\pi$ as some kind of
pattern before we sorted. We start by showing how this idea allows us
to describe stack-sortable permutations as well as West-$2$-stack-sortable
permutations.

We know that $\pi$ is not sorted by the stack if and only if $S(\pi)$
contains the classical pattern $21$. Therefore consider a particular
occurrence of this pattern in $S(\pi)$. Before sorting, the elements
in this occurrence must have formed the pattern $ 21 =
\pattern{scale=0.5}{ 2 }{ 1/2, 2/1 }{} $ in $\pi$. In order to remain
in this order the element corresponding to $2$ must be popped off the
stack by a larger element before the element corresponding to $1$
enters. Thus the box $(1,2)$ must have at least one element and we
have an occurrence of the marked mesh pattern $
\patternsbm{scale=0.75}{ 2 }{ 1/2,
  2/1}{}{1/2/2/3/\scriptscriptstyle{1}} $ which is equivalent to the
pattern $ \pattern{scale=0.5}{ 3 }{ 1/2, 2/3, 3/1 }{}.  $ We have
therefore reproven Knuth's result.
\begin{theorem}[\cite{MR0378456}] \label{thm:Knuth}
  A permutation is stack-sortable if and only if it avoids $231$.
\end{theorem}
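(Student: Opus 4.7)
The plan is to rephrase the statement as a statement about the image $S(\pi)$ and then trace the sorting procedure back to $\pi$. Since $\pi$ is stack-sortable precisely when $S(\pi) = \id$, and $\id$ is the unique permutation avoiding $21$, the theorem is equivalent to: $S(\pi)$ contains the classical pattern $21$ if and only if $\pi$ contains $231$.

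First I would prove the forward direction by fixing an occurrence of $21$ in $S(\pi)$; call its values $a > b$, so $a$ is output before $b$. I would argue, by contradiction, that $a$ must precede $b$ in $\pi$: if $b$ appeared first in $\pi$, then when $a$ later arrives at the stack, either $b$ is still present (in which case $b \le \text{top} \le a$ forces $b$ to be popped before $a$ is pushed) or $b$ has already been popped; in either case $b$ is output before $a$, contradicting the assumption. Next, with $a$ before $b$ in $\pi$, I would note that $a$ must be popped from the stack before $b$ enters, since otherwise the smaller $b$ would be pushed on top of $a$ and output first. A pop of $a$ is triggered only by the arrival of some element $c$ strictly larger than $a$, and such $c$ must occur between $a$ and $b$ in $\pi$. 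The three positions then carry values in the order $a,c,b$ with $b<a<c$, giving exactly an occurrence of $231$ in $\pi$. This is the content of the marked mesh pattern $\patternsbm{scale=0.5}{ 2 }{ 1/2, 2/1}{}{1/2/2/3/\scriptscriptstyle{1}}$ the authors highlighted.

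For the converse I would take an occurrence $acb$ of $231$ in $\pi$ (so $b<a<c$, positions of $a,c,b$ increasing) and follow the stack-sort restricted to these three elements. When $a$ is processed, it eventually sits somewhere in the stack. When $c$ arrives, since $c>a$ and the stack is increasing from the top, $a$ (together with everything on top of it that is smaller than $c$) gets popped before $c$ is pushed; in particular $a$ is output before $b$ is read from the input. Since $a>b$, their relative order in $S(\pi)$ is a $21$ inversion, producing the required occurrence of $21$.

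The main obstacle is just the careful bookkeeping of the stack state in the forward direction, specifically ruling out the "$b$ before $a$ in $\pi$" case and pinpointing the existence of the larger intermediate element $c$; once these two local observations are in place the rest is a direct matter of reading off the pattern and its converse. No additional machinery beyond the stack-sort definition and the translation of $S(\pi) \in \Av(21)$ into "$\pi$ is sortable" is required.
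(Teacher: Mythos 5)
Your proof is correct and follows essentially the same route as the paper: an inversion $a>b$ in $S(\pi)$ must already be an inversion in $\pi$, and preserving it through the stack forces a larger element $c$ between $a$ and $b$, which is exactly the marked mesh pattern $\patternsbm{scale=0.5}{ 2 }{ 1/2, 2/1}{}{1/2/2/3/\scriptscriptstyle{1}}$, i.e.\ the classical pattern $231$. You merely spell out the two stack-bookkeeping claims (and the converse direction) that the paper leaves implicit in its one-paragraph derivation.
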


We can similarly reprove West's result on West-$2$-stack-sortable
permutations, \textit{i.e.}, permutations $\pi$ such that $S^2(\pi) =
\id$. By Knuth's result we know that $\pi$ will be sorted by two
passes through the stack if and only if $S(\pi)$ avoids the pattern
$231$. An occurrence of $231$ must have formed either of the patterns
\begin{equation*} 
\pattern{scale=0.75}{ 3 }{ 1/2, 2/3, 3/1 }{} \qquad \pattern{scale=0.75}{ 3 }{ 1/3, 2/2, 3/1 }{}
\end{equation*}
in $\pi$.
For the elements in the pattern on the left
to stay in the same order as they pass through the stack we must have an element in
the box $(2,3)$ to pop the element corresponding to $3$ out
of the stack before the smallest element enters. Now, the opposite
happens for the pattern on the right. The $3$ must stay on the stack
until $2$ enters, so there can be no elements in the box $(1,3)$.
Then both $2$ and $3$ must leave the stack before $1$ enters. Thus the
patterns above become the two marked mesh patterns on the left
below. These are more naturally written as the two mesh patterns on
the right.
\[
\patternsbm{scale=0.75}{ 3 }{ 1/2, 2/3, 3/1 }{}{2/3/3/4/\scriptscriptstyle{1}} \quad \patternsbm{scale=0.75}{ 3 }{ 1/3, 2/2, 3/1 }{1/3}{2/3/3/4/\scriptscriptstyle{1}} \qquad\qquad\qquad \pattern{scale=0.75}{ 4 }{ 1/2, 2/3, 3/4, 4/1 }{} \quad \pattern{scale=0.75}{ 4 }{ 1/3, 2/2, 3/4, 4/1 }{1/3, 1/4}\vspace{-1ex}
\]
By a lemma of Hilmarsson, \textit{et al.}~\cite{HJSV11}, the last
pattern on the right is equivalent to $\pattern{scale=0.4, baseline=5pt}{ 4 }{ 1/3,
  2/2, 3/4, 4/1 }{1/4}$, in the sense that a permutation either contains both
patterns or avoids both. (For these two patterns it is also easy to
see directly that they are equivalent.) As mentioned above this
pattern is another representation of the barred pattern
$3\bar{5}241$. Thus we have re-derived West's result.

\begin{theorem}[\cite{W90}] \label{thm:West}
A permutation is West-$2$-stack-sortable if and only if it avoids $2341$ and $3\bar{5}241$.
\end{theorem}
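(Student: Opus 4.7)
My plan is to reduce the theorem to Knuth's result (Theorem~\ref{thm:Knuth}) and then apply the same preimage technique used to reprove it. Since $\pi$ is West-$2$-stack-sortable iff $S(S(\pi)) = \id$, Theorem~\ref{thm:Knuth} tells me that this is equivalent to $S(\pi)$ avoiding the classical pattern $231$. So the problem becomes: classify the patterns in $\pi$ whose $S$-image can produce a specified occurrence of $231$, and forbid all of them.

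I would first determine which original relative orders of three entries in $\pi$ can yield a $231$ pattern on output. Since a single stack pass only reverses entries that are on the stack simultaneously, a brief case analysis on the stack trajectory shows that the only possibilities are that the three entries already formed the pattern $231$ in $\pi$, or formed the pattern $321$. I would then analyze each case separately, tracking the pushes and pops required to preserve the $231$ relative order on output. In the $231$ case the middle (largest) entry must be popped before the smallest one is read, forcing a larger entry to arrive between them; this yields the marked mesh pattern $\patternsbm{scale=0.5}{ 3 }{ 1/2, 2/3, 3/1 }{}{2/3/3/4/\scriptscriptstyle{1}}$, which is equivalent to the classical pattern $2341$. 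In the $321$ case the largest entry must wait on the stack until the middle entry arrives (box $(1,3)$ empty) and then both must leave before the smallest entry is read (box $(2,3)$ nonempty); this yields $\patternsbm{scale=0.5}{ 3 }{ 1/3, 2/2, 3/1 }{1/3}{2/3/3/4/\scriptscriptstyle{1}}$, which rewrites as the mesh pattern $\pattern{scale=0.5}{ 4 }{ 1/3, 2/2, 3/4, 4/1 }{1/3, 1/4}$.

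The final step is to translate this second mesh pattern into West's barred pattern. I would invoke the simplification lemma of Hilmarsson \textit{et al.}~\cite{HJSV11} to remove the shading of box $(1,3)$, observing that any occurrence with an entry in that box would already provide an occurrence of the first forbidden pattern. This collapses it to $\pattern{scale=0.5}{ 4 }{ 1/3, 2/2, 3/4, 4/1 }{1/4}$, which is precisely the mesh representation of $3\bar{5}241$. The main obstacle is the stack-trajectory analysis: I must confirm that the two cases are truly exhaustive and that the marked-box conditions are both necessary and sufficient, which comes down to a careful simulation of the stack behavior on the three marked entries relative to anything appearing between them.
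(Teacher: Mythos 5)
Your proposal is correct and follows essentially the same route as the paper: reduce via Theorem~\ref{thm:Knuth} to $S(\pi)$ avoiding $231$, observe that the preimage candidates are $231$ and $321$, derive the same marked/shaded boxes, and use the Hilmarsson \emph{et al.}\ lemma to drop the shading of box $(1,3)$ and recover $3\bar{5}241$. Your direct justification for removing that shading (an entry there creates an occurrence of $2341$) matches the paper's parenthetical remark that the equivalence is also easy to see directly.
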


\subsection{The bubble-sort operator}

The bubble-sort operator swaps adjacent entries in a permutation if
the left entry is larger than the right entry.
Let $B(\pi)$ denote the output of one pass of bubble-sort on $\pi$.
For instance, $B(521634) = 215346$. A modification of the method
above works equally well for $B$. We see that
$B(\pi)$ contains the pattern $21$ if and only if $\pi$ contains $ 21
= \pattern{scale=0.5}{ 2 }{ 1/2, 2/1 }{} $. To make sure that these
elements stay in this order, we either need a large element in front
of $2$, which would mean $2$ would never be moved; or
a large element in between $2$ and $1$ that will stop $2$
from moving past $1$.  We get the marked mesh pattern $
\patternsbm{scale=0.75}{ 2 }{ 1/2,
  2/1}{}{0/2/2/3/\scriptscriptstyle{1}}$, which is equivalent
to the two classical patterns $231$ and $321$. Thus,
$B(\pi) = \id$ if and only if $\pi$ avoids $231$ and $321$, as was first made explicit by Albert, \textit{et al.}~\cite{AABCD10}.
In the same paper the authors show that for any classical pattern $p$
with at least three left-to-right maxima, the third of which is not
the final symbol of $p$, the set $B^{-1}(\Av(p))$ is not described by
classical patterns. We consider the smallest example of such a
pattern, $p = 1243$. For a proof of the following proposition see \cite[Proposition~3.3]{U11W}.

\begin{proposition} \label{prop:bubble1243}
$\displaystyle{
B^{-1}(\Av(1243)) = \Av \left(
\patternsbm{scale=0.75}{ 4 }{ 1/1, 2/2, 3/4, 4/3 }{}{0/4/4/5/\scriptscriptstyle{1}},
\patternsbm{scale=0.75}{ 4 }{ 1/1, 2/4, 3/2, 4/3 }{0/4,1/4,2/4}{3/4/4/5/\scriptscriptstyle{1}},
\patternsbm{scale=0.75}{ 4 }{ 1/2, 2/1, 3/4, 4/3 }{0/2,0/3,0/4,1/2,1/3,1/4}{2/4/4/5/\scriptscriptstyle{1}},
\patternsbm{scale=0.75}{ 4 }{ 1/4, 2/1, 3/2, 4/3 }{0/4,1/4,2/4}{3/4/4/5/\scriptscriptstyle{1}}
\right).}
$
\end{proposition}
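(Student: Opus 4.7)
The plan is to exploit the following explicit description of a single pass of bubble-sort: if $\pi \in \symS_n$ has left-to-right maxima at positions $i_1 < \cdots < i_k$, then in $B(\pi)$ each LR-maximum $\pi(i_j)$ lands at position $i_{j+1}-1$ (setting $i_{k+1} = n+1$), while every other entry $\pi(i)$ lands at position $i-1$. In particular, the LR-maxima of $\pi$ appear in $B(\pi)$ in value-increasing order.

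For the harder inclusion $\Av(P) \subseteq B^{-1}(\Av(1243))$, I argue contrapositively. Assume $B(\pi)$ has a $1243$-occurrence at positions $p_1<p_2<p_3<p_4$ with values $a,b,d,c$ (in pattern order, so $a<b<c<d$), and write $q_x$ for the position of $x \in \{a,b,c,d\}$ in $\pi$. Since LR-maxima of $\pi$ appear in value-increasing order in $B(\pi)$, the values $c$ and $d$ cannot both be LR-maxima; a short case-check rules out $c$ alone being LR-max as well, so $q_c = p_4+1$. The remaining analysis splits on whether $b$ is LR-max of $\pi$ with $q_b<q_a$ and, if not, on where $q_d$ falls relative to $q_a,q_b$:
\begin{itemize}
\item $q_b < q_a$ (which forces $b$ to be LR-max of $\pi$): $\pi$ exhibits the order $b,a,d,c$, matching Pattern~3;
\item $q_d < q_a$ (which forces $d$ to be LR-max): $\pi$ exhibits $d,a,b,c$, matching Pattern~4;
\item $q_a < q_d < q_b$ (again $d$ is LR-max): $\pi$ exhibits $a,d,b,c$, matching Pattern~2;
\item $q_b < q_d$: $\pi$ exhibits $a,b,d,c$, matching Pattern~1.
\end{itemize}
In each case the shading is enforced by the fact that the distinguished LR-maximum of $\pi$ is not exceeded until its next LR-max position at $p_3+1$ (or, for Pattern~3, at $p_2+1$), and the marked region is witnessed either by that next LR-max (when $d$ is LR-max of $\pi$) or by the element $>d$ that forces $d$ to be non-LR-max.

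For the reverse inclusion, I verify directly that each of the four patterns in $\pi$ yields $1243$ in $B(\pi)$, by reading off the LR-max structure forced by the shading of each pattern and tracking where the four entries land under $B$. For Pattern~1, the required big element before the fourth pattern position ensures that $c$, and either $d$ or its next LR-max, is positioned so that all four entries land in $B(\pi)$ in the order $a,b,d,c$; Pattern~4 sends the LR-max $d$ to position $p_3$ just before the next LR-max supplied by the marked cell, while $c$ shifts one step left; Patterns~2 and~3 are analogous.

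The main obstacle is completing the case analysis: one must verify that the constraints force $q_d > q_a$ whenever $b$ is LR-max with $q_b < q_a$, thereby ruling out configurations like $b,d,a,c$ and $d,b,a,c$ in $\pi$ (which would correspond to patterns missing from $P$), and one must check in each case that the shading and marked regions of the corresponding pattern exactly capture the information supplied by the bubble-sort analysis --- this is precisely why the four marked mesh patterns in $P$ are structured as they are.
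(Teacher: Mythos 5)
Your proposal is correct and follows essentially the same route as the paper: the paper (which defers the details of this proposition to \cite{U11W}) likewise generates the candidate orders of the four letters in $\pi$ and then derives the shadings and markings from which elements must, or must not, be overtaken by a larger element before a given point, and your left-to-right-maxima landing-position description of $B$ is just an explicit form of that analysis. The case elimination you flag as the main obstacle does go through with the tools you already set up: if $q_b<q_a$ and also $q_d<q_a$, then $d$ would have to be a left-to-right maximum of $\pi$ to land after $a$ and $b$, contradicting that $b$ is one; and if $q_b<q_d<q_a$, then the next left-to-right maximum after $b$ occurs at a position at most $q_d<q_a$, so $b$ lands at a position strictly before $q_a-1$, where $a$ lands.
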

Note that all the patterns can be expanded to mesh patterns, but at the cost of having more patterns.


\subsection{West-$3$-stack-sortable permutations}

We now turn to West-$3$-stack-sortable permutations, \textit{i.e.}, permutations $\pi$ such that $S^3(\pi) = \id$. By West's result (Theorem~\ref{thm:West}) $S^3(\pi) = \id$ if and only if $S(\pi)$ avoids these two patterns:
\begin{equation} \label{eqWestpatts}
W_1 = \pattern{scale=0.75}{ 4 }{ 1/2, 2/3, 3/4, 4/1 }{} \qquad\quad W_2 = \pattern{scale=0.75}{ 4 }{ 1/3, 2/2, 3/4, 4/1 }{1/4}
\end{equation}
We will use the same method as we did above, but when we consider the pattern on the right, the shaded box will cause some complications and the decorated patterns introduced above will be necessary.

\begin{lemma} \label{lem:I's}
An occurrence of $W_1$ in $S(\pi)$ comes from exactly one of the patterns below in $\pi$.
\begin{align*}
I_1 = \pattern{scale=0.75}{ 5 }{ 1/2, 2/3, 3/4, 4/5, 5/1 }{} \,\,\,\,
I_2 = \pattern{scale=0.75}{ 5 }{ 1/2, 2/4, 3/3, 4/5, 5/1 }{2/4,2/5} \,\,\,\,
I_3 = \pattern{scale=0.75}{ 5 }{ 1/3, 2/2, 3/4, 4/5, 5/1 }{1/3,1/4,1/5} \,\,\,\,
I_4 = \pattern{scale=0.75}{ 5 }{ 1/4, 2/2, 3/3, 4/5, 5/1 }{1/4,1/5,2/4,2/5} \,\,\,\,
I_5 = \pattern{scale=0.75}{ 5 }{ 1/4, 2/3, 3/2, 4/5, 5/1 }{1/4,1/5,2/3,2/4,2/5}
\end{align*}
\end{lemma}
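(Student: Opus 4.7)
Fix an occurrence of $W_1=2341$ in $S(\pi)$, and denote its four values by $a<b<c<d$, appearing in $S(\pi)$ in the order $b,c,d,a$. The plan is to reconstruct the relative order of $a,b,c,d$ in $\pi$ itself, exhibit an auxiliary fifth element forced by the stack dynamics, and then enumerate the possibilities to see that exactly one of the patterns $I_1,\dotsc,I_5$ must be present.

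First I would show that $a$ is the last of the four in $\pi$. Indeed, once $a$ is pushed onto the stack it can only be displaced by a larger element, and every element of $\{b,c,d\}$ is larger than $a$; hence if $a$ preceded any of them in $\pi$, it would already be output by the time that element is pushed, contradicting $a$'s final position among these values in $S(\pi)$. Next, since $d$ is pushed on arrival and $a<d$, there must be some element $e>d$ strictly between $d$ and $a$ in $\pi$ to pop $d$ off the stack before $a$ enters, for otherwise $a$ would be placed on top of $d$ and output before $d$ in the terminal flush.

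The heart of the proof is then a case analysis on the relative order of $b,c,d$ in $\pi$. The ordering $cdb$ is impossible: when $d$ arrives, either $c$ has already been output (having been popped by an earlier larger element) or $c$ is popped by $d$ itself; either way, $c$ precedes $b$ in $S(\pi)$, contradicting our occurrence. For each of the five remaining orderings $bcd$, $bdc$, $cbd$, $dbc$, $dcb$, I would trace the stack-sort and read off precisely which regions of the grid (relative to the five points $b,c,d,e,a$) must be devoid of larger intervening entries in order for the output order $b,c,d,a$ to be preserved. For instance, in the $cbd$ case, $c$ must remain on the stack until $d$ pops it, so no entry greater than $c$ may lie between $c$ and $b$ in $\pi$, which is exactly the shading of $I_3$. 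The analogous bookkeeping for the other four orderings produces $I_1$, $I_2$, $I_4$, $I_5$ respectively.

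Finally, because the underlying classical patterns $23451$, $24351$, $32451$, $42351$, $43251$ are pairwise distinct, the five $I_j$'s are mutually exclusive when applied to a fixed choice of five points, giving the ``exactly one'' in the statement. The main technical point is the case-by-case verification, for each of the five orderings, that the claimed shading is both necessary — any violating entry between two of $b,c,d$ forces one of them to be popped prematurely and output in the wrong order — and sufficient for the sort to produce $b,c,d,a$ in the prescribed order; this is a direct but case-heavy check of the stack dynamics, which is exactly the kind of reasoning the algorithm of Section~\ref{sec:algo} automates.
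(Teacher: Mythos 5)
Your proposal is correct and is, in substance, the same argument the paper uses: the paper's proof simply invokes Algorithm~\ref{algo:ShadeAndMark}, whose computation of $\unSvar(2341)=\{2341,2431,3241,4231,4321\}$ and subsequent shading/marking is exactly your case analysis on the order of $b,c,d$ (with $cdb$ excluded) plus the marked box that expands into the fifth element $e$. The only nitpick is the phrase ``$c$ must remain on the stack until $d$ pops it'' in the $cbd$ case — $c$ need only survive until $b$ is pushed on top of it — but the shading you derive from it is the correct one, so this does not affect the argument.
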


\begin{proof}
Use Algorithm~\ref{algo:ShadeAndMark} from Section~\ref{sec:algo}. See also~\cite[Lemma~4.1]{U11W}.
\end{proof}

We now consider the pattern $W_2$, but without the shading.
\begin{lemma} \label{lem:W_2woShading}
An occurrence of $3241$
in $S(\pi)$ comes from exactly one of the patterns below in $\pi$.
\begin{align*}
j_1 = \patternsbm{scale=0.75}{ 5 }{ 1/3, 2/2, 3/4, 4/5, 5/1 }{}{1/3/2/6/\scriptscriptstyle{1}} \qquad
j_2 = \pattern{scale=0.75}{ 5 }{ 1/3, 2/4, 3/2, 4/5, 5/1 }{2/4,2/5} \qquad
j_3 = \patternsbm{scale=0.75}{ 5 }{ 1/4, 2/3, 3/2, 4/5, 5/1 }{1/4,1/5,2/4,2/5}{2/3/3/4/\scriptscriptstyle{1}}
\end{align*}
\end{lemma}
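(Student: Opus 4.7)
The plan is to track the four entries $a, b, c, d$ of $\pi$ whose images in $S(\pi)$ form the given occurrence of $3241$ — so relatively $d < b < a < c$ — and determine in which of the three shapes $j_1, j_2, j_3$ they sit inside $\pi$. A preliminary batch of observations describes $\pi$ restricted to these four entries and their stack poppers. Since stack-sort outputs $p$ before $q$ whenever $p$ precedes $q$ in $\pi$ and $q > p$, the contrapositive applied to $(a,b)$ and $(c,d)$ yields $a$ before $b$ and $c$ before $d$ in $\pi$. Moreover $a$ cannot be popped at the end-of-input flush, for such a flush outputs the remaining stack entries in increasing order whereas $b < a$ is required to appear after $a$ in $S(\pi)$. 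Hence there is a popper $\alpha > a$, namely the first element of $\pi$ after $a$ whose value exceeds $a$, and a short argument shows $\alpha$ must lie strictly between $a$ and $b$ in $\pi$ (else $b$ would be pushed on top of $a$ on the stack and forced out first). The symmetric argument produces $c' > c$ strictly between $c$ and $d$ in $\pi$.

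I then split into three cases by the position of $c$ in $\pi$ relative to $a$ and $b$. If $c$ follows $b$ in $\pi$ then the entries $a, b, c, c', d$ occur in $\pi$ in that order with values forming the pattern $32451$, and $\alpha$ (lying between $a$ and $b$ with value above $a$) witnesses the marked cell of $j_1$. If $c$ lies strictly between $a$ and $b$ in $\pi$ then $a, c, b, c', d$ form the pattern $34251$; moreover no value exceeding $c$ may lie between $c$ and $b$ in $\pi$, since such an interloper would pop $c$ off the stack before $b$ is ever pushed and $c$ would then reach the output before $b$, which is exactly the shading $(2,4), (2,5)$ of $j_2$. If $c$ precedes $a$ in $\pi$, then on the stack $a$ sits directly atop $c$, so the popper $\alpha$ must satisfy $\alpha < c$ (otherwise $\alpha$ also sweeps $c$ off the stack and $c$ exits before $b$), whence $a < \alpha < c$. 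The entries $c, a, b, c', d$ then realise the pattern $43251$; the same ``no value above $c$ in the $c$--$a$ or $a$--$b$ gap'' argument now produces the four shaded cells of $j_3$, and $\alpha$ witnesses the marked cell $R_{2,3}$.

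The three cases are exhaustive and pairwise disjoint according to the $\pi$-position of $c$, which yields the ``exactly one'' conclusion. The main obstacle is the careful per-cell bookkeeping that no further element of $\pi$ — the popper $c'$, any intermediate popper of $b$, or other entries left unexamined — inadvertently falls in a shaded cell of the selected $j_i$; in each case this follows from the value constraints derived on the poppers (for instance, in Case (ii), when $\alpha \neq c$ the element $\alpha$ lands in the column between $a$ and $c$, which is entirely unshaded in $j_2$), but it is easy to miss a cell. The algorithm \textsc{ShadeAndMark} of Section~\ref{sec:algo}, invoked in the proof of Lemma~\ref{lem:I's}, is designed precisely to automate this verification.
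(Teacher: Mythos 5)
Your argument is correct and is in substance the same as the paper's: the paper's proof simply invokes Algorithm~\ref{algo:ShadeAndMark}, and your three cases (by the $\pi$-position of $c$) reproduce the three candidates $3241$, $3421$, $4321$ of $\unSvar(3241)$, while your popper arguments are exactly the shading rule for inversions of $S(\pi)$ that were non-inversions of $\pi$ and the marking rule for inversions that must be preserved, as stated in the two items preceding the algorithm. The only presentational difference is that you carry out the verification by hand, introducing the popper $c'$ of $c$ directly as the explicit fifth point of each $j_i$ rather than first recording it as a mark and then expanding it.
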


\begin{proof}
Use Algorithm~\ref{algo:ShadeAndMark} from Section~\ref{sec:algo}. See also~\cite[Lemma~4.2]{U11W}.
\end{proof}

We now consider what additional conditions cause the patterns $j_1$, $j_2$ and $j_3$ in the lemma to give the correct shading in the pattern $W_2$. We express these conditions in the next lemma and two propositions.

\begin{lemma} \label{lem:j_3}
An occurrence of $j_3$ in a permutation $\pi$ will become an occurrence of $W_2$ in $S(\pi)$.
\end{lemma}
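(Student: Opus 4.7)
The plan is to exhibit four specific entries of $\pi$ whose images under $S$ form a $W_2$-occurrence in $S(\pi)$. Label the given $j_3$-occurrence so that its five dots correspond to entries $a_1,a_2,a_3,a_4,a_5$ of $\pi$ at positions $p_1<\cdots<p_5$ with relative values $a_5<a_3<a_2<a_1<a_4$. Let $E$ be the set of entries of $\pi$ whose position lies in $(p_2,p_3)$ and whose value lies in $(a_2,a_1)$; the marked box of $j_3$ guarantees $E\neq\emptyset$. Choose $f\in E$ of maximal value and $f_0\in E$ of minimal position. I claim that $a_2,a_3,f,a_5$ (in this order) constitute a $W_2$-occurrence in $S(\pi)$; the underlying classical pattern $3241$ is immediate from $a_5<a_3<a_2<f$.

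Next I would verify that these four entries appear in $S(\pi)$ in the claimed order by tracing the stack. By the shading of $j_3$, no entry of $\pi$ with position in $(p_2,p_3)$ has value above $a_1$, so the first entry of value greater than $a_2$ arriving after $p_2$ lies in $E$ and equals $f_0$. Hence $a_2$ is popped at time $p_{f_0}<p_3$. Next, $a_3$ is popped by the first entry $g$ of value exceeding $a_3$ arriving after $p_3$, which exists since $a_4>a_3$; thus $a_3$ is output strictly after $a_2$. Between $p_f$ and $p_3$, no entry can pop $f$: such a popper would lie in $E$ and strictly exceed $f$, contradicting maximality. So $f$ remains on the stack when $a_3$ is pushed, sits strictly below $a_3$, and is therefore output strictly after $a_3$. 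Finally $f$ is popped no later than $p_4$ (by $a_4$), while $a_5$ is pushed at $p_5>p_4$, so $a_5$ is output after $f$.

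The hard part is the shading of $W_2$: no element of $S(\pi)$ strictly between $a_2$ and $a_3$ may have value above $f$. Any such intermediate element $z$ is popped at some time $t$ with $p_{f_0}\leq t\leq p_g$. If $t=p_{f_0}$ and $z$ is popped after $a_2$, then $z$ sat below $a_2$ on the stack, so $z<f_0\leq f$. If $t=p_g$ and $z$ is popped before $a_3$, then $z$ was pushed above $a_3$ during $(p_3,p_g)$, so $z\leq a_3<f$. Otherwise $p_{f_0}<t<p_g$ and $z$ is popped by the entry $x=\pi(t)$, with $x>z$. When $t<p_3$ the shading of $j_3$ gives $x<a_1$; if additionally $x>a_2$ then $x\in E$ and hence $x\leq f$ by maximality, while $x\leq a_2<f$ otherwise. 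When $t=p_3$ we have $x=a_3<f$, and when $t>p_3$ the definition of $g$ forces $x\leq a_3<f$. In every case $z<x\leq f$. The principal obstacle of the proof is exactly this case analysis, which must simultaneously exploit both shaded regions of $j_3$ and the extremal choice of $f$ inside the marked region to rule out any tall entry creeping into $S(\pi)$ between $a_2$ and $a_3$.
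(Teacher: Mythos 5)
Your argument is correct and complete. Note that the paper itself offers no proof of this lemma (it is explicitly ``left to the reader''), so there is nothing to compare line by line; your write-up supplies the missing details in the same spirit as the paper's proof of Proposition~\ref{prop:J2}, namely by tracing each relevant entry through the stack and analysing who pops whom. I verified the key steps: $a_2$ is output when $f_0$ arrives, $a_3$ when $g$ arrives, $f$ survives until after $a_3$ is pushed by maximality in $E$ together with the shading of column $2$, and the final case analysis correctly shows every entry emitted between $a_2$ and $a_3$ has value below $f$.

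One remark on the choice of witnesses. The paper's methodology (see Lemma~\ref{lem:W_2woShading} and the discussion of Algorithm~\ref{algo:ShadeAndMark}) implicitly intends the four entries of the resulting $3241$ to be the images of the values $4,3,2,1$ of the underlying candidate $4321$, i.e.\ $a_2,a_3,a_1,a_5$ in your notation, with $a_4$ and the marked entry serving only as auxiliary poppers. You instead substitute the marked entry $f$ for $a_1$ as the ``$4$'' of the occurrence. Both choices work, but using $a_1$ makes the verification of the shaded box $(1,4)$ of $W_2$ slightly more direct: the shading of columns $1$ and $2$ of $j_3$ already forbids any entry of value above $a_1$ from arriving between $p_1$ and $p_3$, so nothing taller than $a_1$ can be emitted between $a_2$ and $a_3$, and the extremality of $f$ is then not needed for that step (only for guaranteeing the order of the output). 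Your route pays for the substitution with the extra case analysis exploiting maximality of $f$, which you carry out correctly, so this is a stylistic rather than a substantive difference.
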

We leave the proof to the reader. We rename the pattern $J_3$ and note that it can also be expanded into a mesh pattern.
\[
J_3 = \patternsbm{scale=0.75}{ 5 }{ 1/4, 2/3, 3/2, 4/5, 5/1 }{1/4,1/5,2/4,2/5}{2/3/3/4/\scriptscriptstyle{1}}
= \pattern{scale=0.75}{ 6 }{ 1/5, 2/3, 3/4, 4/2, 5/6, 6/1 }{1/5,1/6,2/5,2/6,3/5,3/6}
\]

\begin{proposition} \label{prop:J2}
An occurrence of $j_2$ in a permutation $\pi$ becomes an occurrence of $W_2$ in $S(\pi)$ if and only
if it is part of one of the patterns below, where elements that have been added to $j_2$ are circled.
\begin{align*}
J_{2,1} &= \imopatternsbm{scale=0.75}{ 6 }{ 1/3, 2/4, 3/5, 4/2, 5/6, 6/1 }{}{2/4}{1/3,1/4,1/5,1/6,2/5,2/6,3/5,3/6}{} \quad
J_{2,2} = \decpatternww{scale=0.75}{ 7 }{ 1/3, 2/4, 3/6, 4/5, 5/2, 6/7, 7/1 }{}{2/4,3/6}{0/5,1/3,1/4,1/5,1/6,1/7,2/5,2/6,2/7,3/6,3/7,4/5,4/6,4/7}{}{3/5/4/6/{\onetwo}} \quad
J_{2,3} = \decpatternww{scale=0.75}{ 8 }{ 1/7, 2/3, 3/4, 4/6, 5/5, 6/2, 7/8, 8/1 }{}{1/7,3/4,4/6}{1/5,1/6,1/7,1/8,2/3,2/4,2/5,2/6,2/7,2/8,3/5,3/6,3/7,3/8,4/6,4/7,4/8,5/5,5/6,5/7,5/8}{}{4/5/5/6/{\onetwo}} \quad
J_{2,4} = \decpatternww{scale=0.75}{ 8 }{ 1/8, 2/3, 3/4, 4/6, 5/5, 6/2, 7/7, 8/1 }{}{1/8,3/4,4/6}{1/5,1/6,1/7,1/8,2/3,2/4,2/5,2/6,2/7,2/8,3/5,3/6,3/7,3/8,4/6,4/7,4/8,5/5,5/6,5/7,5/8}{}{4/5/5/6/{\onetwo}} \\
J_{2,5} &= \decpatternww{scale=0.75}{ 7 }{ 1/3, 2/4, 3/7, 4/5, 5/2, 6/6, 7/1 }{}{2/4,3/7}{0/5,0/6,1/3,1/4,1/5,1/6,1/7,2/5,2/6,2/7,3/7,4/5,4/6,4/7}{}{3/5/4/7/{\onetwo}} \quad
J_{2,6} = \decpatternww{scale=0.75}{ 8 }{ 1/8, 2/3, 3/4, 4/7, 5/5, 6/2, 7/6, 8/1 }{}{1/8,3/4,4/7}{1/5,1/6,1/7,1/8,2/3,2/4,2/5,2/6,2/7,2/8,3/5,3/6,3/7,3/8,4/7,4/8,5/5,5/6,5/7,5/8}{}{4/5/5/7/{\onetwo}} \quad
J_{2,7} = \decpatternww{scale=0.75}{ 6 }{ 1/3, 2/5, 3/4, 4/2, 5/6, 6/1 }{}{2/5}{0/4,1/3,1/4,1/5,1/6,2/5,2/6,3/4,3/5,3/6}{}{2/4/3/5/{\onetwo}} \quad
J_{2,8} = \decpatternww{scale=0.75}{ 7 }{ 1/6, 2/3, 3/5, 4/4, 5/2, 6/7, 7/1 }{}{1/6,3/5}{1/4,1/5,1/6,1/7,2/3,2/4,2/5,2/6,2/7,3/5,3/6,3/7,4/4,4/5,4/6,4/7}{}{3/4/4/5/{\onetwo}} \\
J_{2,9} &= \decpatternww{scale=0.75}{ 7 }{ 1/7, 2/3, 3/5, 4/4, 5/2, 6/6, 7/1 }{}{1/7,3/5}{1/4,1/5,1/6,1/7,2/3,2/4,2/5,2/6,2/7,3/5,3/6,3/7,4/4,4/5,4/6,4/7}{}{3/4/4/5/{\onetwo}} \quad
J_{2,10} = \decpatternww{scale=0.75}{ 6 }{ 1/3, 2/6, 3/4, 4/2, 5/5, 6/1 }{}{2/6}{0/4,0/5,1/3,1/4,1/5,1/6,2/6,3/4,3/5,3/6}{}{2/4/3/6/{\onetwo}} \quad
J_{2,11} = \decpatternww{scale=0.75}{ 7 }{ 1/7, 2/3, 3/6, 4/4, 5/2, 6/5, 7/1 }{}{1/7,3/6}{1/4,1/5,1/6,1/7,2/3,2/4,2/5,2/6,2/7,3/6,3/7,4/4,4/5,4/6,4/7}{}{3/4/4/6/{\onetwo}} \quad
J_{2,12} = \pattern{scale=0.75}{ 5 }{ 1/3, 2/4, 3/2, 4/5, 5/1 }{1/3,1/4,1/5,2/4,2/5}
\end{align*}
\end{proposition}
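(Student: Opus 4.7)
The plan is to apply Algorithm~\ref{algo:ShadeAndMark} of Section~\ref{sec:algo} to the pattern $W_2$, starting from the $j_2$ pre-image identified in Lemma~\ref{lem:W_2woShading}, and to track which additional entries of $\pi$ and which additional shadings or decorations are required so that the shaded box of $W_2$ is realised in $S(\pi)$.

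First I would work out the basic dynamics. Simulating the stack-sort on the unperturbed $j_2 = 34251$ gives $S(j_2)=32415$; labelling the entries of $j_2$ as $a,b,c,d,e$ (with values $3,4,2,5,1$), they map to positions $1,3,2,5,4$ of the output, so the induced $3241$ occurrence in $S(\pi)$ is the subsequence $(a,c,b,e)$, with $b$ playing the role of the ``$4$'' in $W_2$. The shading of $W_2$ therefore requires that no entry of $S(\pi)$ lies strictly between the images of $a$ and $c$ with value exceeding the value of $b$.

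Next I would enumerate the possible extra entries of $\pi$ that could land between $a$ and $c$ in $S(\pi)$ with value greater than that of $b$. Such an extra is popped from the stack between the times that $a$ and $c$ are popped, and a case analysis on its position relative to $P_a,P_b,P_c,P_d,P_e$ and on its value relative to $V_a,V_b,V_c,V_d,V_e$ isolates the dangerous configurations. Each dangerous configuration can then be neutralised by one or more of the following three mechanisms, which together produce the twelve patterns $J_{2,1},\dots,J_{2,12}$: (i) further shading on $\pi$ (as in $J_{2,12}$ and in the additional shaded boxes of every $J_{2,i}$); (ii) the insertion of an additional entry that intercepts the dangerous extra and reroutes its popping time (the circled entries in $J_{2,1}$--$J_{2,11}$); and (iii) a $\onetwo$-decoration that forces a sub-region of $\pi$ to be decreasing, so that any entries in that region are popped before $a$ rather than between $a$ and $c$ (the decorated regions in $J_{2,2}$--$J_{2,11}$).

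The forward direction ``being part of some $J_{2,i}$ implies the $W_2$ shading in $S(\pi)$'' is then verified by tracing stack-sort on each of the twelve configurations separately. The converse direction requires exhaustiveness: if the $W_2$ shading holds, then the local picture of $\pi$ around the $j_2$ occurrence must coincide with one of the twelve patterns; this follows from the systematic way Algorithm~\ref{algo:ShadeAndMark} generates the constraints, discharging every possible dangerous extra either by a shaded box, a neutralising entry, or a $\onetwo$-decoration.

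The main obstacle is the bookkeeping in the converse direction, in particular deciding when a dangerous extra can be eliminated by a simple shaded region versus when it can only be controlled by a $\onetwo$-decoration. The latter situation arises when two or more extras in a region interact with the stack in a way whose joint effect is not captured by any marked mesh condition, and is precisely what motivates the introduction of decorated patterns in Section~\ref{sec:genpatts}.
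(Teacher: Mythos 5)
Your overall strategy --- track which extra entries of $\pi$ the stack could emit between the images of the ``3'' and the ``2'' with value above the ``4'', and classify the ways of preventing this --- is the right one, and your three mechanisms (extra shading, an inserted circled entry, a $\onetwo$-decoration) are exactly those appearing in $J_{2,1},\dots,J_{2,12}$. But there is a genuine gap in how you propose to establish exhaustiveness for the converse direction. You defer it to ``the systematic way Algorithm~\ref{algo:ShadeAndMark} generates the constraints'', yet that algorithm takes a \emph{classical} pattern as input and outputs only shadings and marks; it neither accepts the mesh pattern $W_2$ (extending it to mesh patterns is explicitly listed as an open problem in Section~\ref{sec:op}) nor produces decorations. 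So the algorithm cannot discharge the ``only if'' half for you; the whole point of this proposition is that the shaded box of $W_2$ forces an analysis the algorithm does not perform.

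The missing idea is the organizing case split that makes the analysis finite. Enumerating ``dangerous extras'' one at a time does not obviously terminate: arbitrarily many extras may be present and, as you note yourself, it is their joint interaction with the stack that forces the decorations. The paper instead anchors everything on the single forced event in the sorting of $j_2$: some element must pop the ``3'' off the stack before the ``2'' arrives, and its value falls into exactly one of four ranges (between the ``3'' and the ``4'', between the ``4'' and the ``5'', above the ``5'', or it is the ``4'' itself), giving intermediate patterns $j_{2,1},\dots,j_{2,4}$. Within each case the remaining constraints are then forced: boxes whose occupants would prematurely pop the popper must be shaded; the region whose entries must all be held on the stack until the ``4'' arrives must be decreasing, which is precisely the $\onetwo$-decoration (note that these entries are emitted \emph{after} the ``2'', not before the ``3'' as you state --- your description fits only the pre-``3'' region); and a separate sub-split handles elements already sitting on the stack when the ``3'' is pushed, producing, e.g., $J_{2,7},J_{2,8},J_{2,9}$ from $j_{2,2}$. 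Without this anchor your case analysis has no a priori bound and the ``if and only if'' cannot be closed.
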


\begin{proof}
To ensure that there are no elements in the shaded box in $W_2$ we must look at the element that pops $3$ in $j_2$. There are four different possibilities.

\begin{align*}
j_{2,1} &= \decpatternww{scale=0.75}{ 6 }{ 1/3, 2/4, 3/5, 4/2, 5/6, 6/1 }{}{2/4}{1/3,1/4,1/5,1/6,3/5,3/6}{}{2/5/3/7/{\onetwo}} \quad
j_{2,2} = \decpatternww{scale=0.75}{ 6 }{ 1/3, 2/5, 3/4, 4/2, 5/6, 6/1 }{}{2/5}{1/3,1/4,1/5,1/6,2/5,2/6,3/4,3/5,3/6}{}{2/4/3/5/{\onetwo}} \quad
j_{2,3} = \decpatternww{scale=0.75}{ 6 }{ 1/3, 2/6, 3/4, 4/2, 5/5, 6/1 }{}{2/6}{1/3,1/4,1/5,1/6,2/6,3/4,3/5,3/6}{}{2/4/3/6/{\onetwo}} \quad
j_{2,4} = \pattern{scale=0.75}{ 5 }{ 1/3, 2/4, 3/2, 4/5, 5/1 }{1/3,1/4,1/5,2/4,2/5}
\end{align*}
We explain the shadings and the decoration of the pattern $j_{2,2}$ as the others are similar. For this pattern, the size of the element that popped $3$ from the stack was in-between $4$ and $5$. Since this was the element that popped $3$ there can be no elements in boxes $(1,3), \dotsc, (1,6)$. The boxes $(2,5)$ and $(2,6)$ cannot contain an element, since that would pop the element we just added (the $5$ in $j_{2,2}$) and this element would land in the shaded box in $W_2$. Now consider the decorated box $(2,4)$. It can contain elements, but none of them are allowed to leave the stack prior to $4$ being pushed on, since any one of them would then land in the shaded box in $W_2$. Elements in this region must then be in descending order.

We must make sure that elements that arrived on the stack prior to $3$ are not popped into the shaded box in $W_2$. We only consider the pattern $j_{2,2}$ here.
If there are elements from box $(0,4)$ still on the stack when $3$ is put on they will be popped by $5$ and will land in the shaded box in $W_2$. We must therefore have this box empty, or an element in box $(0,5)$ or $(0,6)$ that pops everything before $3$ is pushed on. We get the three patterns $J_{2,7}, J_{2,8}, J_{2,9}$.
\end{proof}

We now consider what constraints must be imposed on the pattern $j_1$ to get $W_2$ after sorting.
\begin{proposition} \label{prop:J1}
An occurrence of $j_1$ in a permutation $\pi$ becomes an occurrence of $W_2$ in $S(\pi)$ if and only
if it is part of one of the patterns below, where elements that have been added to $j_1$ are circled.
\begin{align*}
J_{1,1} &= \imopatternsbm{scale=0.75}{ 6 }{ 1/3, 2/4, 3/2, 4/5, 5/6, 6/1 }{}{2/4}{1/3,1/4,1/5,1/6,2/5,2/6}{} \,\,
J_{1,2} = \decpatternww{scale=0.75}{ 7 }{ 1/3, 2/4, 3/6, 4/2, 5/5, 6/7, 7/1 }{}{2/4,3/6}{0/5,1/3,1/4,1/5,1/6,1/7,2/5,2/6,2/7,3/6,3/7}{}{3/5/4/6/{\onetwo}} \,\,
J_{1,3} = \decpatternww{scale=0.75}{ 8 }{ 1/7, 2/3, 3/4, 4/6, 5/2, 6/5, 7/8, 8/1 }{}{1/7,3/4,4/6}{1/5,1/6,1/7,1/8,2/3,2/4,2/5,2/6,2/7,2/8,3/5,3/6,3/7,3/8,4/6,4/7,4/8}{}{4/5/5/6/{\onetwo}} \,\,
J_{1,4} = \decpatternww{scale=0.75}{ 8 }{ 1/8, 2/3, 3/4, 4/6, 5/2, 6/5, 7/7, 8/1 }{}{1/8,3/4,4/6}{1/5,1/6,1/7,1/8,2/3,2/4,2/5,2/6,2/7,2/8,3/5,3/6,3/7,3/8,4/6,4/7,4/8}{}{4/5/5/6/{\onetwo}}\\
J_{1,5} &= \decpatternww{scale=0.75}{ 7 }{ 1/3, 2/4, 3/7, 4/2, 5/5, 6/6, 7/1 }{}{2/4,3/7}{0/5,0/6,1/3,1/4,1/5,1/6,1/7,2/5,2/6,2/7,3/7}{}{3/5/4/7/{\onetwo}} \,\,
J_{1,6} = \decpatternww{scale=0.75}{ 8 }{ 1/8, 2/3, 3/4, 4/7, 5/2, 6/5, 7/6, 8/1 }{}{1/8,3/4,4/7}{1/5,1/6,1/7,1/8,2/3,2/4,2/5,2/6,2/7,2/8,3/5,3/6,3/7,3/8,4/7,4/8}{}{4/5/5/7/{\onetwo}}\,\,
J_{1,7} = \decpatternww{scale=0.75}{ 6 }{ 1/3, 2/5, 3/2, 4/4, 5/6, 6/1 }{}{2/5}{0/4,1/3,1/4,1/5,1/6,2/5,2/6}{}{2/4/3/5/{\onetwo}} \,\,
J_{1,8} = \decpatternww{scale=0.75}{ 7 }{ 1/6, 2/3, 3/5, 4/2, 5/4, 6/7, 7/1 }{}{1/6,3/5}{1/4,1/5,1/6,1/7,2/3,2/4,2/5,2/6,2/7,3/5,3/6,3/7}{}{3/4/4/5/{\onetwo}} \\
J_{1,9} &= \decpatternww{scale=0.75}{ 7 }{ 1/7, 2/3, 3/5, 4/2, 5/4, 6/6, 7/1 }{}{1/7,3/5}{1/4,1/5,1/6,1/7,2/3,2/4,2/5,2/6,2/7,3/5,3/6,3/7}{}{3/4/4/5/{\onetwo}} \,\,
J_{1,10} = \decpatternww{scale=0.75}{ 6 }{ 1/3, 2/6, 3/2, 4/4, 5/5, 6/1 }{}{2/6}{0/4,0/5,1/3,1/4,1/5,1/6,2/6}{}{2/4/3/6/{\onetwo}} \,\,
J_{1,11} = \decpatternww{scale=0.75}{ 7 }{ 1/7, 2/3, 3/6, 4/2, 5/4, 6/5, 7/1 }{}{1/7,3/6}{1/4,1/5,1/6,1/7,2/3,2/4,2/5,2/6,2/7,3/6,3/7}{}{3/4/4/6/{\onetwo}}
\end{align*}
\end{proposition}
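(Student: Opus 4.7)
The plan is to mirror the proof of Proposition~\ref{prop:J2}. Fix an occurrence of $j_1$ in $\pi$: five elements $a_1,\dots,a_5$ whose standardization is $32451$, together with a marker element $b$ strictly between the positions of $a_1$ and $a_2$ in $\pi$ with $b>a_1$. Take $b$ to be the leftmost such element; then $b$ is precisely the element that pops $a_1$ off the stack. A direct trace of the sort shows that $a_1,a_2,a_3,a_5$ appear in this order in $S(\pi)$ and play the roles of $3,2,4,1$ in an occurrence of the classical pattern $3241$. The task is to characterize precisely when this occurrence satisfies the shading of $W_2$—namely, when no element is output strictly between $a_1$ and $a_2$ in $S(\pi)$ with value exceeding $a_3$.

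Split into three cases according to the value of $b$: Case (A) $a_1<b<a_3$, Case (B) $a_3<b<a_4$, and Case (C) $b>a_4$. In each case the element $b$ itself is harmless for the shading: in (A) it is popped by $a_3$ and lands in the output between $a_1$ and $a_2$ but below $a_3$; in (B) and (C) it lands after $a_3$ in the output. Certain shadings are forced in each case by the choice of $b$ (no larger element between $a_1$ and $b$ in $\pi$, by leftmost minimality) and by the need to prevent spoilers (no element between $b$ and $a_2$ whose eventual output falls in the forbidden box). These give the base patterns $J_{1,1}$ in Case (A), $J_{1,7}$ in Case (B), and $J_{1,10}$ in Case (C).

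The remaining patterns arise from configurations in which a forced shading is violated by an explicit extra element $c$. Tracing the stack, such a $c$ can spoil the $W_2$ occurrence in two ways: (i) it sits between $b$ and $a_2$ in $\pi$ with $c>b$ and $c>a_3$, gets pushed on top of $b$, and is itself popped by a still larger element before $a_3$ arrives, thereby landing in the forbidden output window; or (ii) it sits on the stack beneath $a_1$ (i.e., left of $a_1$ in $\pi$) with value in the dangerous range, and is ejected by $b$, $a_3$, or $a_4$ into the forbidden window. To prevent (i) one includes $c$ as a circled element and adds a $\onetwo$ decoration in the adjoining region to block the ascending pair that would otherwise pop $c$ prematurely; to prevent (ii) one either shades the corresponding column-$0$ box or introduces a still larger circled element before $a_1$ that preemptively clears the stack. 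Running these refinements through the three base cases yields exactly the six patterns $J_{1,1},\dots,J_{1,6}$ in Case (A), the three patterns $J_{1,7},J_{1,8},J_{1,9}$ in Case (B), and the two patterns $J_{1,10},J_{1,11}$ in Case (C). The converse—that each listed $J_{1,k}$ produces an occurrence of $W_2$ in $S(\pi)$—follows from a direct stack trace on each pattern.

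The principal obstacle is verifying completeness and non-redundancy of the case analysis: every configuration in $\pi$ that produces $W_2$ in $S(\pi)$ from a given $j_1$ occurrence must correspond to exactly one $J_{1,k}$, and each $J_{1,k}$ must genuinely enforce the shading of $W_2$. Following the style of the proof of Proposition~\ref{prop:J2}, I would work out one representative derivation in full detail—say Case (A), showing that a Type (i) spoiler $c$ with $a_3<c<a_4$ between $b$ and $a_2$ forces the $\onetwo$ decoration and yields $J_{1,2}$—and assert that the remaining refinements follow by the same reasoning, using Algorithm~\ref{algo:ShadeAndMark} of Section~\ref{sec:algo} as an independent computational check that the eleven patterns are exactly the required list.
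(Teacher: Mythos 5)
Your proposal is correct and follows essentially the same route as the paper, which omits this proof precisely because it mirrors the argument for Proposition~\ref{prop:J2}: a case split on the value of the element that pops the `3' of $j_1$ (giving the $6+3+2$ grouping $J_{1,1}$--$J_{1,6}$, $J_{1,7}$--$J_{1,9}$, $J_{1,10}$--$J_{1,11}$), followed by the two spoiler mechanisms (elements pushed onto the stack after the popping element and ejected before the `2' is output, and elements already on the stack beneath the `3'). One caveat: Algorithm~\ref{algo:ShadeAndMark} only computes preimages of \emph{classical} patterns, so it cannot serve as the independent check of the decorated-pattern list that you suggest---extending it to mesh patterns such as $W_2$ is listed as an open problem in Section~\ref{sec:op}.
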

\begin{proof}
The proof of this proposition is similar to the proof of Proposition \ref{prop:J2} and therefore omitted.
\end{proof}

Taken together, Lemmas \ref{lem:I's} and \ref{lem:j_3}, with Propositions \ref{prop:J2} and \ref{prop:J1} produce a list of $29$ patterns describing permutations that are not West-$3$-stack-sortable. We can simplify this list considerably by observing that the patterns $J_{1,1}, \dotsc, J_{1,6}$ all imply containment of $I_1$ so we can remove them. Further simplifications of this sort can be made. Also, by considering what happens with the decorated patterns when they contain a certain number of elements in the decorated region the list can be simplified even further. See \cite[Theorem~4.6]{U11W} for the details.

\begin{theorem} \label{thm:W3s}
A permutation is West-$3$-stack-sortable if and only if it avoids the decorated patterns
\begin{align*}
&\pattern{scale=0.75}{ 5 }{ 1/2, 2/3, 3/4, 4/5, 5/1 }{} \quad
\pattern{scale=0.75}{ 5 }{ 1/2, 2/4, 3/3, 4/5, 5/1 }{2/4,2/5} \quad
\pattern{scale=0.75}{ 5 }{ 1/3, 2/2, 3/4, 4/5, 5/1 }{1/3,1/4,1/5} \quad
\pattern{scale=0.75}{ 5 }{ 1/4, 2/2, 3/3, 4/5, 5/1 }{1/4,1/5,2/4,2/5} \quad
\pattern{scale=0.75}{ 5 }{ 1/4, 2/3, 3/2, 4/5, 5/1 }{1/4,1/5,2/3,2/4,2/5} \quad
\pattern{scale=0.75}{ 5 }{ 1/3, 2/4, 3/2, 4/5, 5/1 }{1/3,1/4,1/5,2/4,2/5} \\
&\decpatternww{scale=0.75}{ 6 }{ 1/3, 2/6, 3/2, 4/4, 5/5, 6/1 }{}{}{0/4,0/5,1/3,1/4,1/5,1/6,2/4,2/6}{}{2/5/3/6/{\onetwo}} \quad
\decpatternww{scale=0.75}{ 6 }{ 1/3, 2/6, 3/4, 4/2, 5/5, 6/1 }{}{}{0/4,0/5,1/3,1/4,1/5,1/6,2/4,2/6,3/4,3/5,3/6}{}{2/5/3/6/{\onetwo}} \quad
\decpatternww{scale=0.75}{ 7 }{ 1/7, 2/3, 3/6, 4/2, 5/4, 6/5, 7/1 }{}{}{1/4,1/5,1/6,1/7,2/3,2/4,2/5,2/6,2/7,3/4,3/6,3/7}{}{3/5/4/6/{\onetwo}} \quad
\decpatternww{scale=0.75}{ 7 }{ 1/7, 2/3, 3/6, 4/4, 5/2, 6/5, 7/1 }{}{}{1/4,1/5,1/6,1/7,2/3,2/4,2/5,2/6,2/7,3/4,3/6,3/7,4/4,4/5,4/6,4/7}{}{3/5/4/6/{\onetwo}}
\end{align*}
\end{theorem}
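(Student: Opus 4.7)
The plan is to first derive a preliminary avoidance condition involving $29$ patterns by combining the earlier lemmas and propositions, and then to reduce this list to the $10$ patterns in the statement by two kinds of simplification.

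By Theorem~\ref{thm:West}, $\pi$ is West-$3$-stack-sortable iff $S(\pi)$ avoids both $W_1$ and $W_2$ from \eqref{eqWestpatts}. Lemma~\ref{lem:I's} translates the condition ``$S(\pi)$ avoids $W_1$'' into avoidance of the five patterns $I_1,\dotsc,I_5$ in $\pi$. For $W_2$, Lemma~\ref{lem:W_2woShading} decomposes the classical preimages of $3241$ into $j_1,j_2,j_3$; Lemma~\ref{lem:j_3} handles $j_3$ directly (contributing $J_3$), while Propositions~\ref{prop:J2} and~\ref{prop:J1} refine $j_2$ and $j_1$ into the $12$ patterns $\{J_{2,k}\}$ and the $11$ patterns $\{J_{1,k}\}$ needed to guarantee the shaded box of $W_2$ stays empty. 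Assembling these gives: $\pi$ is West-$3$-stack-sortable iff it avoids the $29$-element set
\[
\{I_1,\dotsc,I_5\}\cup\{J_3\}\cup\{J_{1,1},\dotsc,J_{1,11}\}\cup\{J_{2,1},\dotsc,J_{2,12}\}.
\]

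The first simplification uses the elementary principle that if every occurrence of a pattern $P$ in a permutation contains an occurrence of another pattern $Q$ already in our list, then $P$ is redundant and can be removed. A direct inspection confirms that the classical pattern $I_1 = 23451$ embeds in each of $J_{1,1},\dotsc,J_{1,6}$ and $J_{2,1},\dotsc,J_{2,6}$ as an unobstructed classical subpattern; for instance, columns $1,2,3,5,6$ of the classical skeleton $345261$ of $J_{2,1}$ carry values whose standardisation is exactly $I_1$, and this subsequence avoids every shaded box and decorated region of the larger pattern. This step eliminates $12$ patterns and reduces the list to $17$.

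The main obstacle, and the second simplification, is to consolidate the $11$ remaining decorated patterns $J_3, J_{1,7},\dotsc,J_{1,11}, J_{2,7},\dotsc,J_{2,11}$ into the $4$ decorated patterns listed in the second row of the statement, together with the classical mesh pattern $J_{2,12}$ that already occupies the sixth position of the first row. Following the hint preceding the statement, I would proceed by analysing what can occur in the decorated regions: when the region is empty, the underlying mesh pattern is typically implied by some $I_i$ or by $J_{2,12}$ and can be discarded; when the region contains one or more entries, pairs of patterns whose classical skeletons differ only by the precise position of a single additional entry can be merged into a single decorated pattern with a suitably enlarged decorated region retaining the $\onetwo$-avoidance restriction. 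The delicate part is to verify that the $4$ resulting merged decorated patterns capture exactly the remaining $11$ patterns without over- or under-counting, and that no genuinely West-$3$-stack-sortable permutation is wrongly forbidden; this bookkeeping is the heart of the argument and is carried out in full in~\cite[Theorem~4.6]{U11W}.
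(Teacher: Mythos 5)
Your proposal follows essentially the same route as the paper: assemble the $29$ patterns from Lemmas~\ref{lem:I's}, \ref{lem:W_2woShading} and~\ref{lem:j_3} together with Propositions~\ref{prop:J2} and~\ref{prop:J1}, prune the patterns whose classical skeletons contain $I_1$ (your extension of this pruning to $J_{2,1},\dotsc,J_{2,6}$ checks out and is exactly what the paper means by ``further simplifications of this sort''), and then consolidate the remaining decorated patterns by analysing the contents of their decorated regions. Like the paper itself, you defer the final consolidation bookkeeping to \cite[Theorem~4.6]{U11W}, so your argument matches the paper's own proof in both structure and level of detail.
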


\noindent
Note that each of the decorated patterns in the theorem is equivalent to an infinite family of mesh patterns.
\section{An algorithm}
\label{sec:algo}

We shall now automate proving statements such as Theorem~\ref{thm:West}. More precisely we shall provide an algorithm that, given a classical pattern $p$, produces a finite list of (marked) mesh patterns $P$ such that
\[
S^{-1}(\Av(p)) = \Av(P).
\]
The algorithm can be modified for the bubble-sort operator to prove statements such as Proposition~\ref{prop:bubble1243}.
Given the classical pattern $p$ we identify what orderings of the letters in $p$ are possible prior to sorting, and produce a list, denoted $\unSvar(p)$, of candidates which themselves are classical patterns.
\begin{proposition} \label{prop:unSvar}
     Let $p$ be a permutation of a finite set of integers, and let
     the largest letter of $p$ be $m=\max(p)$. Write $p=\alpha
     m\beta$ and $\alpha=a_1a_2\dotsm a_i$. Then
     \[
     \unSvar(p) \,=\, \bigcup_{j=0}^i \left\{\,
         \gamma m\delta \,\colon
         \gamma\in\unSvar(a_1a_2\dotsm a_j),\, \delta\in\unSvar(a_{j+1}\dotsm a_i\beta) 
       \,\right\}
     \]
     contains all classical patterns that can become $p$ after one pass of stack-sort.
\end{proposition}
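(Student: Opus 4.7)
The proof will proceed by induction on $k=|p|$, with immediate base case $k\le 1$ (using $\unSvar(\emptyset)=\{\emptyset\}$). For the inductive step, suppose $\pi'$ is a classical pattern of length $k$ that can become $p$ after one pass of stack-sort; by definition there exists a permutation $\pi$ containing $\pi'$ as a subsequence such that the standardization of those positions in $S(\pi)$ equals $p$. Let $m$ denote the largest letter of $\pi'$ (equivalently, of $p$), and decompose $\pi'=\gamma m\delta$ with $|\gamma|=j$. The plan is to show that $j\le i$ and that, still inside $\pi$, the subsequence $\gamma$ sorts to $a_1\dotsm a_j$ while the subsequence $\delta$ sorts to $a_{j+1}\dotsm a_i\beta$; the inductive hypothesis applied to these shorter patterns then gives $\gamma\in\unSvar(a_1\dotsm a_j)$ and $\delta\in\unSvar(a_{j+1}\dotsm a_i\beta)$, and hence $\pi'=\gamma m\delta$ lies in the union on the right-hand side.

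The crux is a pop-order analysis around $m$. Recall that the stack is kept increasing from top to bottom, so when an input $x$ arrives, every stack letter at the top strictly less than $x$ is popped before $x$ is pushed. Since every chosen letter other than $m$ has value less than $m$, at the moment $m$ enters the stack every chosen letter still on the stack is popped first. Two consequences follow: every letter of $\gamma$ has been output by the time $m$ is placed on the stack, so each letter of $\gamma$ appears strictly before $m$ in $S(\pi)$; and each letter of $\delta$ is pushed only after $m$, sits above $m$ on the stack, and hence can only be output after $m$ has been pushed. Thus $\gamma$-letters precede all $\delta$-letters among the chosen outputs appearing before $m$, so the first $j$ chosen positions of $S(\pi)$ are exactly the letters of $\gamma$, which after standardization form $a_1\dotsm a_j$ (in particular $j\le i$). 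The remaining $i-j$ chosen positions before $m$ together with the $|\beta|$ positions after $m$ are occupied by $\delta$ and form $a_{j+1}\dotsm a_i\beta$.

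The main obstacle I anticipate is verifying that non-chosen letters larger than $m$ do not disrupt this clean bookkeeping. Such a letter $z$ pops every top letter smaller than itself as it enters, but because the stack is always increasing from top to bottom and all chosen letters have value less than $m<z$, any chosen letter on the stack must sit above any stored copy of $z$, and is therefore popped together with everything else above $z$ before $z$ itself is pushed; this preserves the invariant that only images of $\gamma$-letters can appear among the chosen outputs before $m$ is placed on the stack. Once this point is settled, the single permutation $\pi$ simultaneously witnesses both that $\gamma$ sorts to $a_1\dotsm a_j$ and that $\delta$ sorts to $a_{j+1}\dotsm a_i\beta$, and the induction closes.
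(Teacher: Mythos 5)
Your proof is correct and takes essentially the same route as the paper's (much terser) argument: split the candidate pattern at the maximal chosen letter $m$, note that every chosen letter preceding $m$ in the input is popped before $m$ is pushed and hence precedes both $m$ and every chosen letter following $m$ in the output, and recurse on the two resulting blocks. The paper records only a two-sentence sketch of this idea, and your pop-order analysis is the correct elaboration of it (the worry about non-chosen letters larger than $m$ is in fact a non-issue, since the argument only uses that all letters smaller than an arriving input are popped before it is pushed).
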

This proposition gives a recursive algorithm for computing $\unSvar(p)$. Recall that an \emph{inversion} in a permutation is an occurrence of the classical pattern $21$, while a \emph{non-inversion}
is an occurrence of $12$. Note that if two elements in $p$ are part of an inversion, they must also be part of an inversion in all patterns in $\unSvar(p)$. Non-inversion in $p$ place no restrictions on the patterns in $\unSvar(p)$.

\begin{proof}
The idea behind the proof is that letters can only be moved to the left and small letters are stopped by larger letters. This implies that after the largest letter, $m$, has been moved to a particular location we can recurse on what is remaining to the left and the right.
\end{proof}

Note that $\unSvar(132) = \{321,312,132\}$. However, it is easy to check that there is no way that $321$ can become $132$ after sorting. Candidates like this one are removed in lines $9$--$11$ in Algorithm~\ref{algo:ShadeAndMark} below.

\begin{example} \label{ex:3241}
For the pattern $p = 3241$, considered in Lemma~\ref{lem:W_2woShading}, we have $m = 4$, $\alpha = 32$, $\beta = 1$ and $i = 2$. When $j = 0$, $\gamma$ is the empty word and we get the set $\{ 4 \delta \colon \delta \in \unSvar(321) \}$. It is easy to verify that $\unSvar(321) = \{321\}$, so we get the pattern $4321$. When $j = 1$ we get the set $\{ \gamma 4 \delta \colon \gamma \in \unSvar(3),\, \delta \in \unSvar(21) \}$. Again it is easy to check that $\unSvar(21) = \{21\}$ so we get the pattern $3421$. Finally, when $i = 2$ we get the set $\{ \gamma 4 \delta \colon \gamma \in \unSvar(32),\, \delta \in \unSvar(1) \}$ which gives us the pattern $3241$. In total we have the patterns $4321$,$3421$, and $3241$, which are the underlying classical patterns in Lemma~\ref{lem:W_2woShading}.
\end{example}

Before we state the algorithm note that
\begin{enumerate}
\item if $u > v$ is an inversion in $p'$ that becomes a
  non-inversion in $p=S(p')$, then $u$ must stay on the
  stack until $v$ arrives, and therefore we must shade all
  the boxes above $u$ and between $u$ and $v$. Thus there can
  be no elements of $p'$ in this shaded region. This is handled by line 3 in Algorithm~\ref{algo:ShadeAndMark};
\item if $u > v$ is an inversion in $p'$ that becomes an inversion
  in $p = S(p')$, then there must be another element $c > u$ that pops
  $u$ before $v$ is pushed onto the stack, thus maintaining
  the inversion. If such an element is present in $p'$ we need not do
  anything. If there is no such element we need to mark the region
  above $u$ and between $u$ and $v$ with a ``$1$''. This is handled by lines $4$--$16$ in Algorithm~\ref{algo:ShadeAndMark}.
\end{enumerate}

\renewcommand{\algorithmicrequire}{\textbf{Input:}} 
\begin{algorithm}[h]
  \caption{$\mathsf{ShadeAndMark}$}
  \label{algo:ShadeAndMark}
  \begin{algorithmic}[1]\smallskip
    \REQUIRE $\lambda \in \unSvar(p)$\medskip
    \STATE $n := |p|$
    \STATE $\textrm{marks} := \emptyset$
    \STATE $\textrm{shades} := \bigcup \left\{\, \dbrac{\lambda^{-1}(v),\lambda^{-1}(u)-1} \times \dbrac{v, n} \,:\, (u,v) \in \ninv(p)\,\right\} $\medskip
    \FOR {$(u,v) \in \inv(p)$}\smallskip
      \STATE $i := \lambda^{-1}(u)$
      \STATE $j := \lambda^{-1}(v)$\smallskip
    
      \IF {$\lambda(\ell) < u$ for all $\ell \in \dbrac{i+1,j}$}\smallskip
        \STATE $M := \dbrac{i,j-1} \times \dbrac{u,n}  \,\setminus\, \textrm{shades}$\smallskip
        
        \IF {$M=\emptyset$}
          \RETURN This candidate cannot be marked properly        
        \ENDIF\smallskip
        
        \IF {$m \not\subseteq M$ for all $m\in \textrm{marks}$}
          \STATE add $M$ to marks and remove all supersets of $M$
        \ENDIF\smallskip
        
      \ENDIF\smallskip
      
    \ENDFOR\medskip
 
    \RETURN $(\lambda, \textrm{shades}, \textrm{marks})$
  \end{algorithmic}
\end{algorithm}

\noindent
Here $\ninv(p)$ is the set of non-inversions in $p$, and $\inv(p)$ is the set of inversions in $p$.

\begin{example}
For the pattern $p = 3241$ in Lemma~\ref{lem:W_2woShading} we have $\ninv(p) = \{(3,4),(2,4)\}$ and $\inv(p) = \{(3,2),(3,1),(2,1),(4,1)\}$. We saw in Example \ref{ex:3241} that there are three candidates. We only consider $\lambda = 4321$, for which $\lambda^{-1} = \lambda$. The shading is the union of the sets $\{1\}\times\{4\}$ and $\dbrac{1,2}\times\{4\}$ which is $\{(1,4),(2,4)\}$. The for-loop in the algorithm proceeds as follows.
\begin{itemize}
\item $(u,v) = (3,2)$: $M = \{(2,3)\}$ is added to \textrm{marks},
\item $(u,v) = (3,1)$: $M = \{(2,3),(3,3),(3,4)\}$ is a superset of $\{(2,3)\}$, so it is not added to \textrm{marks},
\item $(u,v) = (2,1)$: $M = \{(3,2),(3,3),(3,4)\}$ is added to \textrm{marks},
\item $(u,v) = (4,1)$: $M = \{(3,4)\}$ is added to \textrm{marks} and the superset $\{(3,2),(3,3),(3,4)\}$ is removed.
\end{itemize}
This leaves us with the marking $\{\{(2,3)\},\{(3,4)\}\}$ which is
consistent with Lemma~\ref{lem:W_2woShading}.
\end{example}

The full algorithm calls $\mathsf{ShadeAndMark}$ for all
classical patterns in $\unSvar(p)$. Below is the result
of applying this algorithm to all classical patterns of length 3.
\begin{align*}
  S^{-1}(\Av(123)) &= \Av\left(
    \pattern{scale=0.62}{3}{1/1,2/2,3/3}{},
    \pattern{scale=0.62}{3}{1/1,2/3,3/2}{2/3},
    \pattern{scale=0.62}{3}{1/2,2/1,3/3}{1/2,1/3},
    \pattern{scale=0.62}{3}{1/3,2/1,3/2}{1/3,2/3},
    \pattern{scale=0.62}{3}{1/3,2/2,3/1}{1/3,2/2,2/3}
  \right)\\[1ex]
  S^{-1}(\Av(132)) &= \Av\left(
    \patternsbm{scale=0.66}{3}{1/1,2/3,3/2}{}{2/3/3/4/\scriptscriptstyle{1}}, 
    \patternsbm{scale=0.66}{3}{1/3,2/1,3/2}{1/3}{2/3/3/4/\scriptscriptstyle{1}}\right)
    = \Av\left(\pattern{scale=0.62}{4}{1/1,2/3,3/4,4/2}{},\pattern{scale=0.62}{4}{1/3,2/1,3/4,4/2}{1/3,1/4}\right)\smallskip\\[1ex]
  S^{-1}(\Av(213)) &= \Av\left(
    \patternsbm{scale=0.66}{3}{1/2,2/1,3/3}{}{1/2/2/4/\scriptscriptstyle{1}},
    \pattern{scale=0.66}{3}{1/2,2/3,3/1}{2/3},
    \patternsbm{scale=0.66}{3}{1/3,2/2,3/1}{1/3,2/3}{2/2/3/3/\scriptscriptstyle{1}}
  \right)
  = \Av\left(\pattern{scale=0.62}{4}{1/2,2/3,3/1,4/4}{}, \pattern{scale=0.62}{4}{1/2,2/4,3/1,4/3}{},
             \pattern{scale=0.62}{3}{1/2,2/3,3/1}{2/3},  \pattern{scale=0.62}{4}{1/4,2/2,3/3,4/1}{1/4,2/4,3/4}\right)\\[1ex]
  S^{-1}(\Av(231)) &= \text{See Theorem~\ref{thm:West}}\\[1ex]
  S^{-1}(\Av(312)) &= \Av\left(
    \patternsbm{scale=0.66}{3}{1/3,2/1,3/2}{}{1/3/2/4/\scriptscriptstyle{1}},
    \patternsbm{scale=0.66}{3}{1/3,2/2,3/1}{2/2,2/3}{1/3/2/4/\scriptscriptstyle{1}}\right)
  = \Av\left(\pattern{scale=0.62}{4}{1/3,2/4,3/1,4/2}{},\pattern{scale=0.62}{4}{1/3,2/4,3/2,4/1}{3/2,3/3,3/4}\right)\\[1ex]
  S^{-1}(\Av(321)) &= \Av\left(
    \patternsbm{scale=0.66}{3}{1/3,2/2,3/1}{}{1/3/2/4/\scriptscriptstyle{1}, 2/2/3/4/\scriptscriptstyle{1}}
  \right)
  = \Av\left(\pattern{scale=0.62}{5}{1/4,2/5,3/2,4/3,5/1}{}, \pattern{scale=0.62}{5}{1/3,2/5,3/2,4/4,5/1}{},
             \pattern{scale=0.62}{5}{1/3,2/4,3/2,4/5,5/1}{}\right)
\end{align*}
Bouvel and Guibert~\cite{BG11} have found a bijection between
$S^{-1}(\Av(312))$ and the set of Baxter permutations; they have also
found a bijection between $S^{-1}(\Av(231))$ and $S^{-1}(\Av(132))$.

We note that the algorithm can easily be extended to accept a finite
list of classical patterns.  We also note that a slight modification
of algorithm $\mathsf{ShadeAndMark}$ has been shown to work with the
bubble-sort operator~\cite{CU11}; this algorithm can for example be
used to prove Proposition~\ref{prop:bubble1243}.

\section{Open problems}
\label{sec:op}

The algorithm above describes the preimage of any set $\Av(p)$ where
$p$ is a classical pattern. Can the algorithm be extended to cover the case where
$p$ is a mesh pattern, or even a decorated pattern? Solving this problem
would automate the description of West-$3$-stack-sortable permutations. More generally,
is there a pattern definition that is stable under $S^{-1}$?

West~\cite{W90} conjectured, and Zeilberger~\cite{Z92} proved, that the number of
West-$2$-stack-sortable permutations is given by \mbox{$2(3n)!/((n+1)!(2n+1)!)$}.
Later Dulucq, Gire and West~\cite{DGW96}
found these permutations to be in bijection with rooted non-separable
planar maps. The enumeration of West-$3$-stack-sortable
permutations is completely open, but knowing the
patterns in Theorem~\ref{thm:W3s} could provide some insight.

\section{Acknowledgements}
\label{sec:ack}
We were supported by grant no.\ 090038013 from the Icelandic
Research Fund. We would like to thank the anonymous referees for detailed and constructive comments.
The first author also wishes the express his gratitude to Michael Albert,
Mike Atkinson, Mathilde Bouvel and Mark Dukes for many interesting and
valuable discussions on the topic of sorting operators.

\bibliographystyle{alpha}
\bibliography{Sort_and_preim}

\newcommand{\etalchar}[1]{$^{#1}$}
\begin{thebibliography}{AAB{\etalchar{+}}11}

\bibitem[AAB{\etalchar{+}}11]{AABCD10}
M.~H. Albert, M.~D. Atkinson, M.~Bouvel, A.~Claesson, and M.~Dukes.
\newblock On the inverse image of pattern classes under bubble sort.
\newblock {\em Journal of Combinatorics}, 2:231--243, 2011.

\bibitem[BC11]{BC11}
P.~Br\"{a}nd\'{e}n and A.~Claesson.
\newblock Mesh patterns and the expansion of permutation statistics as sums of
  permutation patterns.
\newblock {\em Electron. J. Combin.}, 18(2), 2011.

\bibitem[BG11]{BG11}
M.~Bouvel and O.~Guibert.
\newblock Enumeration of permutations sorted with two passes in a stack and
  {$D_8$} symmetries.
\newblock Work in progress, 2011.

\bibitem[B{\'o}n03]{MR2028290}
M.~B{\'o}na.
\newblock A survey of stack-sorting disciplines.
\newblock {\em Electron. J. Combin.}, 9(2):Article 1, 16, 2002/03.
\newblock Permutation patterns (Otago, 2003).

\bibitem[CU11]{CU11}
A.~Claesson and H.~\'Ulfarsson.
\newblock Preimages of pattern classes under bubble and stack sort.
\newblock In preparation, 2011.

\bibitem[DGW96]{DGW96}
G.~Dulucq, S.~Gire, and J.~West.
\newblock Permutations with forbidden subsequences and nonseparable planar
  maps.
\newblock {\em Disc. Math.}, 153(1-3):85--103, 1996.

\bibitem[HJS{\etalchar{+}}11]{HJSV11}
{\'I}.~Hilmarsson, I.~J{\'o}nsd{\'o}ttir, S.~Sigurdard{\'o}ttir,
  S.~Vidarsd{\'o}ttir, and H.~{\'U}lfarsson.
\newblock {W}ilf-classification of mesh patterns of short length.
\newblock In preparation, 2011.

\bibitem[Knu75]{MR0378456}
D.~E. Knuth.
\newblock {\em The art of computer programming}.
\newblock Addison-Wesley Publishing Co., Reading, Mass.-London-Amsterdam,
  second edition, 1975.
\newblock Volume 1: Fundamental algorithms, Addison-Wesley Series in Computer
  Science and Information Processing.

\bibitem[\'U11a]{U11W}
H.~\'Ulfarsson.
\newblock Describing {W}est-$3$-stack-sortable permutations with permutation
  patterns.
\newblock arXiv:1110.1219v2 [math.CO], 2011.

\bibitem[\'U11b]{U11}
H.~\'Ulfarsson.
\newblock A unification of permutation patterns related to {S}chubert
  varieties.
\newblock {\em Pure Math. Appl. to appear}, 2011.

\bibitem[Wes90]{W90}
J.~West.
\newblock {\em Permutations with forbidden subsequences and stack-sortable
  permutations}.
\newblock PhD thesis, MIT, 1990.

\bibitem[Zei92]{Z92}
D.~Zeilberger.
\newblock A proof of {J}ulian {W}est's conjecture that the number of
  two-stack-sortable permutations of length $n$ is $2(3n)!/((n+1)!(2n+1)!)$.
\newblock {\em Disc. Math.}, 102:85--93, 1992.

\end{thebibliography}
\label{sec:biblio}

\end{document}